\def\namedlabel#1#2{\begingroup
\def\@currentlabel{#2}%
\label{#1}\endgroup
}
\title{Pseudo-Riemannian Sasaki solvmanifolds}
\author{Diego Conti and Federico A. Rossi and Romeo Segnan Dalmasso}
\newtheorem{theorem}{Theorem}[section]
\newtheorem{lemma}[theorem]{Lemma}
\newtheorem{corollary}[theorem]{Corollary}
\newtheorem{proposition}[theorem]{Proposition}
\theoremstyle{definition}
\newtheorem{definition}[theorem]{Definition}
\newtheorem{example}[theorem]{Example}
\theoremstyle{remark}
\newtheorem{remark}[theorem]{Remark}
\newcommand{\R}{\mathbb{R}}
\newcommand{\lie}[1]{\mathfrak{#1}} 
\newcommand{\g}{\lie{g}}
\newcommand{\Lie}{\mathcal{L}} 
\newcommand{\C}{\mathbb{C}}
\newcommand{\hook}{\lrcorner\,}
\newcommand{\SU}{\mathrm{SU}}
\newcommand{\su}{\mathfrak{su}}
\newcommand{\SL}{\mathrm{SL}}
\newcommand{\id}{\operatorname{Id}} 
\newcommand{\gl}{\lie{gl}}
\newcommand{\Sl}{\lie{sl}}
\newcommand{\Span}[1]{\operatorname{Span}\left\{#1\right\}}
\newcommand{\eps}{\varepsilon}
\DeclareMathOperator{\ric}{ric} 
\DeclareMathOperator{\Der}{Der}
\DeclareMathOperator{\ad}{ad}
\DeclareMathOperator{\Ad}{Ad}
\DeclareMathOperator{\Tr}{tr}
\newcolumntype{C}{>{$}c<{$}}
\newcolumntype{L}{>{$}l<{$}}
\newcolumntype{R}{>{$}r<{$}}
\begin{document}
\maketitle

\begin{abstract}
	We study a class of left-invariant pseudo-Riemannian Sasaki metrics on solvable Lie groups, which can be characterized by the property that the zero level set of the moment map relative to the action of some one-parameter subgroup $\{\exp tX\}$ is a normal nilpotent subgroup commuting with $\{\exp tX\}$, and $X$ is not lightlike. We characterize this geometry in terms of the Sasaki reduction and its pseudo-K\"ahler quotient under the action generated by the Reeb vector field.
	
	We classify pseudo-Riemannian Sasaki solvmanifolds of this type in dimension $5$ and those of dimension $7$ whose K\"ahler reduction in the above sense is abelian.
\end{abstract}

\renewcommand{\thefootnote}{\fnsymbol{footnote}}
\footnotetext{\emph{MSC class 2020}: \emph{Primary} 53C25; \emph{Secondary} 53D20, 53C50, 22E25}
\footnotetext{\emph{Keywords}: Sasaki, indefinite metric, contact reduction, standard Lie algebra.}
\renewcommand{\thefootnote}{\arabic{footnote}}

\section*{Introduction}
Sasaki manifolds were introduced in~\cite{SasakiHatakeyama} as an odd-dimensional counterpart to K\"ahler geometry; they are characterized by an almost contact metric structure $(\phi,\xi,\eta,g)$ which is both normal and contact. Beside the analogy, they bear a strong relation to K\"ahler geometry in that both the cone over a Sasaki manifold and the space of leaves of the Reeb foliation carry a K\"ahler structure. For pseudo-Riemannian metrics, a completely analogous definition of Sasaki structure can be given, which was first considered in~\cite{Tak:PseudoSasaki}; the relation to pseudo-K\"ahler geometry is the same as in the definite setting.

Arguably, the most interesting Sasaki metrics are those satisfying the Einstein condition $\ric=2n g$, where the Einstein constant is fixed by the dimension. Both in the Riemannian and indefinite case, Einstein-Sasaki metrics are characterized by the existence of a Killing spinor (see~\cite{Baum:TwistorAndKilling}), which makes them relevant for general relativity and supersymmetry (see \cite{Walker1970OnSpacetimes,Duff1986Kaluza-KleinSupergravity}).

In this paper we focus on the homogeneous case, and particularly on invariant pseudo-Riemannian Sasaki metrics on solvmanifolds. Although we do not insist on the Einstein condition here, the prospect of applying the machinery to produce Einstein-Sasaki metrics leads us to consider \emph{standard} solvmanifolds, corresponding to semidirect products $\g\rtimes\lie a$, where $\g$ is nilpotent, $\lie a$ abelian and their sum orthogonal. Indeed, all Riemannian Einstein solvmanifolds are of this type (see \cite{Heber:noncompact,Lauret:Einstein_solvmanifolds}), and even in the indefinite case the standard condition has proved quite effective to produce examples (see \cite{ContiRossi:IndefiniteNilsolitons,ContiRossi:NiceNilsolitons}). In fact, the most studied standard Lie algebras are those of Iwasawa type (or pseudo-Iwasawa, for indefinite signature), namely those for which $\ad X$ is symmetric for all $X$ in $\lie a$.

Restricting to left-invariant pseudo-Riemannian Sasaki metrics on solvable Lie groups allows us to work at the Lie algebra level; we shall therefore refer to the structures under consideration as Sasaki structures on a Lie algebra. Our first result (Proposition~\ref{prop:sasakinotpseudoiwasawa}) is that Sasaki Lie algebras cannot be of pseudo-Iwasawa type. This motivates us to study the more general class of standard Lie algebras, though restricting for simplicity to one-dimensional abelian factors, i.e. $\tilde\g=\g\rtimes\Span{e_0}$. In Proposition~\ref{prop:sasakistandard}, we characterize the Sasaki condition on $\tilde\g$ in terms of the induced structure on $\g$. The resulting conditions on $\g$ are somewhat unwieldy.

However, the situation simplifies if we impose that $\g$ is the zero-level set of a moment map relative to the action of a one-parameter subgroup. In practice, this means that $\phi (e_0)$ lies in the center $\lie z(\g)$. We dub this particular class of Sasaki structures \emph{$\lie z$-standard}. One can then take the Sasaki reduction in the sense of contact geometry, obtaining a new Sasaki nilmanifold endowed with a derivation $D=\ad e_0$ satisfying certain conditions (Corollary~\ref{cor:sasakistandardcentral}). In this setting, the Reeb field $\xi$ is central, so one can take a further quotient and obtain a pseudo-K\"ahler nilmanifold in three dimensions less $(\check \g, \check J,\check \omega)$ (Corollary~\ref{cor:reductiontokahler}); since this quotient can be interpreted as a symplectic reduction of the pseudo-K\"ahler Lie algebra $\tilde\g/\Span{\xi}$, we call it the \emph{K\"ahler reduction} of $\tilde\g$. The K\"ahler reduction admits a derivation $\check D$ induced by $D$, commuting with $\check J$ and satisfying a quadratic equation of the form
\begin{equation}
	\label{eqn:quadraticD}
	[\check D^s,\check D^a]=h\check D^s-2(\check D^s)^2,
\end{equation}
with $h$ a real constant, and $\check D^s,\check D^a$ denoting the symmetric and antisymmetric part of $\check D$ .

This construction can be inverted: starting from a pseudo-K\"ahler nilmanifold with a derivation as above, one obtains a pseudo-K\"ahler solvmanifold in two dimensions higher, then giving a $\lie z$-standard Sasaki solvmanifold by taking a circle bundle (Proposition~\ref{prop:constructive}). This procedure differs from the double extension procedure considered in~\cite{BoucettaTibssirte}, in that the two ``extra'' dimensions span a definite two-plane, rather than neutral.

We show that up to isometry, when $\check D^s$ is both a derivation and diagonalizable over $\C$ it can be assumed to be a projection, giving a simple explicit form to the resulting Sasaki structure (Corollary~\ref{cor:diagonalizable}). Making use of this fact, we classify $\lie z$-standard Sasaki solvmanifolds in dimension $5$ (Theorem~\ref{thm:central5}), and all those in dimension $7$ whose K\"ahler reduction is abelian (Theorem~\ref{thm:central7}).

\smallskip
\textbf{Acknowledgments}
This paper was written as part of the PhD thesis of the third author, written under the supervision of the first author, for the joint PhD programme in Mathematics Università di Milano Bicocca -- University of Surrey.

The authors acknowledge Gruppo Nazionale per le Strutture Algebriche, Geometriche e le loro Applicazioni (GNSAGA) of Istituto Nazionale di Alta Matematica (INdAM).

\section{Pseudo-Riemannian Sasaki structures}
In this section we recall some basic definitions and facts on pseudo-Riemannian Sasaki structures. For further details we refer to~\cite{CalvCaLo:PseudoRiemHomBook,Tak:PseudoSasaki}.
\begin{definition}
	An \emph{almost contact structure} on a $(2n + 1)$-dimensional manifold $M$ is a triple $(\phi, \xi, \eta)$, where $\phi$ is a tensor field of type $(1,1)$, $\xi$ is a vector field, and $\eta$ is a $1$-form, such that
	\[\eta(\xi)=1,\qquad \eta\circ \phi=0,\qquad \phi^2 =-\id+\eta\otimes\xi.\]
	Given a pseudo-Riemannian metric $g$ on $M$, the quadruple $(\phi ,\xi,\eta,g)$ is called \emph{an almost contact metric structure} if $(\phi, \xi, \eta)$ is an almost contact structure and
	\[g(\xi,\xi)=\eps\in\{\pm 1\},\qquad \eta=\eps \xi^{\flat},\qquad g(\phi X,\phi Y)=g(X,Y)-\eps \eta(X)\eta(Y),\]
	for any vector fields $X,Y$.
	
	We will assume $\epsilon=1$ in the sequel.
\end{definition}
Note that if $(\phi,\xi,\eta,g)$ is an almost contact metric structure with $g(\xi,\xi)=\eps=-1$, then defining $\bar g=-g$ we have that $(\phi,\xi,\eta,\bar g)$ is another almost contact metric structure such that $\bar g(\xi,\xi)=\bar \eps=1$, so our assumption does not entail a loss of generality.

\begin{remark}
	The generalized eigenspace of $0$ for $\phi$ is generated by $\xi$. Therefore $0$ is an eigenvalue and $\xi$ is an eigenvector, i.e. $\phi(\xi)=0$.
\end{remark}
\begin{remark}
	The endomorphism $\phi$ is always skew-symmetric: indeed,
	\begin{multline*}
		g(\phi (X),Y)=-g(\phi X, \phi^2Y-\eta(Y)\xi)\\
		=-g(X,\phi (Y))+\eta(X)\eta(\phi(Y))=-g(X,\phi(Y)).
	\end{multline*}
	In fact, if $\phi$ is assumed to be skew-symmetric, $g(\phi X,\phi Y)=g(X,Y)-\eps \eta(X)\eta(Y)$ is equivalent to $\phi^2 =-\id+\eta\otimes\xi$.
\end{remark}

We define the \emph{fundamental $2$-form} associated to the almost contact metric structure $(\phi,\xi, \eta, g)$ as
\[\Phi = g(\cdot, \phi \cdot).\]
In addition, in analogy with the Nijenhuis tensor field for complex manifolds, we define
\[N_{\phi} = \phi^2 [ X , Y ] + [\phi X , \phi Y ] -\phi[\phi X , Y ]- \phi[ X , \phi Y ].\]

\begin{definition}
	An almost contact metric structure $(\phi, \xi, \eta, g)$ is said to be \emph{Sasaki} if
	$(\phi,\xi,\eta,g)$ satisfies $N_{\phi} + d\eta \otimes \xi = 0$ and $d\eta = 2\Phi$.
\end{definition}

Sasaki structures can be characterized in terms of the covariant derivative $\nabla\phi$; as usual, we indicate by $\nabla$ the Levi-Civita connection, by $R$ its curvature tensor, by $\ric$ its Ricci tensor.
\begin{lemma}[{\cite[Proposition 1]{Tak:PseudoSasaki}}]
	\label{lemma:nablaxi}
	Given an almost contact metric structure
	$(\phi, \xi, \eta, g)$ on a manifold of dimension $2n+1$ such that
	\[(\nabla_X\phi)Y =g(X,Y)\xi- \eta(Y)X,\]
	the following hold:
	\begin{enumerate}
		\item $\nabla_X \xi=-\phi(X)$;
		\item $\xi$ is a Killing vector field;
		\item $d\eta(X,Y) = 2\Phi(X,Y)$;
		\item $R(X,Y)\xi=\eta(Y)X-\eta(X)Y$;
		\item $\ric(\xi, X) = 2n \eta (X)$.
	\end{enumerate}
\end{lemma}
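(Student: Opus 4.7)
The lemma is a chain of consequences of the defining identity $(\nabla_X \phi)Y = g(X,Y)\xi - \eta(Y)X$, so the plan is to prove (1) first (by a short computation at $Y=\xi$) and then obtain (2)--(5) as formal corollaries. The main obstacle is really just bookkeeping: keeping track of which of the algebraic identities in the definition of an almost contact metric structure is needed at each step.

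For (1), I would evaluate the defining identity at $Y = \xi$. Using $g(X,\xi) = \eta(X)$ and $\eta(\xi) = 1$, the right-hand side becomes $\eta(X)\xi - X$. On the left, since $\phi(\xi) = 0$ we have $(\nabla_X\phi)\xi = -\phi(\nabla_X\xi)$, so $\phi(\nabla_X\xi) = X - \eta(X)\xi$. Applying $\phi$ once more and using $\phi^2 = -\id + \eta\otimes\xi$ together with $\phi(\xi)=0$ gives $-\nabla_X\xi + \eta(\nabla_X\xi)\xi = -\phi(X)$; the term $\eta(\nabla_X\xi)$ vanishes because $g(\xi,\xi)=1$ is constant, forcing $g(\nabla_X\xi,\xi)=0$. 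This yields $\nabla_X\xi = -\phi(X)$.

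Given (1), claim (2) is immediate: $g(\nabla_X\xi, Y) + g(X, \nabla_Y\xi) = -g(\phi X, Y) - g(X, \phi Y) = 0$ by skew-symmetry of $\phi$ recorded in the second remark. For (3), I would use the standard formula $d\eta(X,Y) = (\nabla_X\eta)(Y) - (\nabla_Y\eta)(X)$ valid for the Levi-Civita connection on a 1-form, rewrite it via $\eta = \xi^\flat$ as $g(\nabla_X\xi, Y) - g(\nabla_Y\xi, X)$, and apply (1) and skew-symmetry of $\phi$ to obtain $-2g(\phi X, Y) = 2g(X, \phi Y) = 2\Phi(X,Y)$.

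For (4), I would expand $R(X,Y)\xi = \nabla_X\nabla_Y\xi - \nabla_Y\nabla_X\xi - \nabla_{[X,Y]}\xi = -\nabla_X(\phi Y) + \nabla_Y(\phi X) + \phi([X,Y])$. Splitting $\nabla_X(\phi Y) = (\nabla_X\phi)Y + \phi(\nabla_X Y)$ and applying the hypothesis, the symmetric terms $g(X,Y)\xi$ cancel; the remaining Levi-Civita terms combine, by torsion-freeness, into $-\phi([X,Y])$, which cancels the last summand, leaving exactly $\eta(Y)X - \eta(X)Y$. Finally, (5) follows by tracing (4): choosing a pseudo-orthonormal frame $\{e_i\}$ with $g(e_i,e_i) = \eps_i$ and using $\ric(X,\xi) = \sum_i \eps_i g(R(e_i,X)\xi, e_i)$, one gets $\eta(X)\sum_i\eps_i^2 - \sum_i\eps_i\eta(e_i)g(X,e_i) = (2n+1)\eta(X) - g(\xi,X) = 2n\,\eta(X)$, where the last sum is evaluated via $\sum_i \eps_i\eta(e_i)e_i = \xi$.
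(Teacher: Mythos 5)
The paper offers no proof of this lemma at all --- it is quoted verbatim from Takahashi's Proposition~1 --- so your blind write-up is in fact the only argument on the table, and it is the standard direct verification: it is correct in substance, and all five parts go through, including the pseudo-Riemannian trace in (5) where the signs $\eps_i$ are handled properly via $\sum_i\eps_i\eta(e_i)e_i=\xi$. The one blemish is a sign slip in the intermediate display of part (1): applying $\phi$ to $\phi(\nabla_X\xi)=X-\eta(X)\xi$ gives $-\nabla_X\xi+\eta(\nabla_X\xi)\xi=\phi(X)$, not $-\phi(X)$; since $\eta(\nabla_X\xi)=0$, this still produces the stated conclusion $\nabla_X\xi=-\phi(X)$, so the error is purely typographical and does not propagate into (2)--(5).
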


Arguing as in \cite[Theorem~7.3.16]{BoGa:SasakianBook}, one obtains:
\begin{proposition}\label{pr:SasakiEquivalent}
	Let $(\phi,\xi,\eta,g)$ be an almost contact pseudo-Riemannian metric structure on $M$. The following are equivalent:
	\begin{enumerate}
		\item $(\phi,\xi,\eta,g)$ is Sasaki;
		\item the cone $(\R^+\times M,J,\omega)$ is pseudo-K\"ahler;
		\item $(\nabla_X\phi)Y =g(X,Y)\xi- \eta(Y)X$;
		\item $\nabla_X\Phi=\eta\wedge X^\flat$.
	\end{enumerate}
\end{proposition}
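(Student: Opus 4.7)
The plan is to prove (3) $\Leftrightarrow$ (4), (3) $\Leftrightarrow$ (1), and (3) $\Leftrightarrow$ (2) separately, which together close the four-way equivalence. Note that whenever (3) is assumed, Lemma~\ref{lemma:nablaxi} automatically supplies $\nabla_X\xi=-\phi X$, $d\eta=2\Phi$, and the Killing property of $\xi$.

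The equivalence (3) $\Leftrightarrow$ (4) is purely algebraic: since $\Phi(Y,Z)=g(Y,\phi Z)$ and $\nabla g=0$, one has $(\nabla_X\Phi)(Y,Z)=g(Y,(\nabla_X\phi)Z)$, and substituting the right-hand side of (3) for $(\nabla_X\phi)Z$ produces $g(X,Z)\eta(Y)-\eta(Z)g(X,Y)=(\eta\wedge X^\flat)(Y,Z)$ after using $g(Y,\xi)=\eta(Y)$. Non-degeneracy of $g$ gives both directions.

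For (3) $\Rightarrow$ (1), I expand
\[N_\phi(X,Y)=\phi^2[X,Y]+[\phi X,\phi Y]-\phi[\phi X,Y]-\phi[X,\phi Y],\]
write each bracket as $\nabla_U V-\nabla_V U$ by torsion-freeness, and substitute (3) wherever $\nabla_U\phi$ appears. Using $\phi^2=-\id+\eta\otimes\xi$ and $\phi\xi=0$ to collect terms, all contributions cancel except $-d\eta(X,Y)\xi$, yielding $N_\phi+d\eta\otimes\xi=0$; together with $d\eta=2\Phi$ this is (1). The reverse implication (1) $\Rightarrow$ (3) is the main obstacle; I would follow Boyer--Galicki by invoking a Koszul-type identity that expresses $2g((\nabla_X\phi)Y,Z)$ in terms of $d\Phi$, $N_\phi$, $d\eta$, and $\Phi$. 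Under (1), $d\Phi=\tfrac12 d(d\eta)=0$ and $N_\phi=-2\Phi\otimes\xi$, so that identity collapses, after using the almost contact relations, to $2(g(X,Y)\eta(Z)-\eta(Y)g(X,Z))$; non-degeneracy of $g$ then forces (3). Nothing in the argument depends on the signature of $g$, so the Riemannian proof transports verbatim.

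Finally, for (3) $\Leftrightarrow$ (2), equip $C(M)=\R^+\times M$ with $\bar g=dr^2+r^2 g$ and define an almost complex structure by $J(r\partial_r)=\xi$, $JX=\phi X-\eta(X)\,r\partial_r$ for $X$ tangent to $M$. The almost contact identities immediately yield $J^2=-\id$ and $\bar g(JU,JV)=\bar g(U,V)$. Since a pseudo-Hermitian structure is pseudo-Kähler precisely when $\bar\nabla J=0$, the implication reduces to a direct check: the Koszul formula on the cone gives $\bar\nabla_{\partial_r}\partial_r=0$, $\bar\nabla_X\partial_r=r^{-1}X$, and $\bar\nabla_X Y=\nabla_X Y-rg(X,Y)\partial_r$, and evaluating $(\bar\nabla_U J)V$ on each of the four combinations $(\partial_r,\partial_r)$, $(\partial_r,X)$, $(X,\partial_r)$, $(X,Y)$ reveals that $\bar\nabla J=0$ is equivalent, on $M$, to the conjunction of (3) and $d\eta=2\Phi$, both of which are already included in (3) via Lemma~\ref{lemma:nablaxi}. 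The cone bookkeeping is the only delicate point and follows the standard Riemannian template.
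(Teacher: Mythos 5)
The paper gives no written proof of this proposition: it simply asserts that the argument of \cite[Theorem~7.3.16]{BoGa:SasakianBook} carries over to indefinite signature, and your outline follows exactly that route. Your $(3)\Leftrightarrow(4)$ is correct, and reducing $(1)\Leftrightarrow(3)$ to the Koszul-type identity for $2g((\nabla_X\phi)Y,Z)$ is the intended argument; note, however, that the entire content of the hard direction $(1)\Rightarrow(3)$ sits inside that identity, which you invoke without stating or checking, so as written that direction is a citation rather than a proof --- which, to be fair, is also all the paper offers.

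The step that would actually fail is the cone computation, because your choice of $J$ is incompatible in sign with the paper's conventions. With $\bar g=dr^2+r^2g$, $J(r\partial_r)=\xi$ and $JX=\phi X-\eta(X)\,r\partial_r$, your own connection formulas give
\[(\bar\nabla_XJ)\partial_r=\frac1r\bigl(\nabla_X\xi-\phi X\bigr),\qquad
(\bar\nabla_XJ)Y=(\nabla_X\phi)Y-\eta(Y)X+g(X,Y)\xi-r\bigl(g(X,\phi Y)+(\nabla_X\eta)(Y)\bigr)\partial_r,\]
so $\bar\nabla J=0$ forces $\nabla_X\xi=+\phi X$ and $(\nabla_X\phi)Y=\eta(Y)X-g(X,Y)\xi$, i.e.\ the \emph{negatives} of what Lemma~\ref{lemma:nablaxi} and condition (3) require. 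For a Sasaki structure in the paper's conventions ($\Phi=g(\cdot,\phi\cdot)$, $d\eta=2\Phi$, $\nabla_X\xi=-\phi X$) your $J$ is therefore not parallel unless $\phi=0$, and the claimed equivalence of $\bar\nabla J=0$ with (3) is false for this $J$. The correct choice is $J(r\partial_r)=-\xi$, $JX=\phi X+\eta(X)\,r\partial_r$ (the Boyer--Galicki convention), for which the same bookkeeping yields precisely $\nabla_X\xi=-\phi X$ and condition (3). This is a one-sign fix, but since the proposition's clause (2) is only meaningful once $J$ and $\omega$ are specified, the sign must be gotten right for the statement you prove to be the statement claimed.
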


Pseudo-Sasaki manifolds are related to pseudo-K\"ahler geometry in the following way. Recall that a pseudo-K\"ahler structure on a manifold $M$ is an almost-pseudo-Hermitian structure $(J,g,\omega)$, with the convention that $\omega=g(\cdot,J\cdot)$, such that $J$ is integrable and $\omega$ is closed; equivalently, $\omega$ is parallel with respect to the Levi-Civita connection.

Like in the Riemannian case, we have the following:
\begin{proposition}[\cite{Ogiue:OnFiberings}]
	\label{prop:kahlerquotient}
	Let $M$ have a pseudo-Riemannian Sasaki structure $(\phi,\xi,\eta,g)$. Then the space of leaves of the Reeb foliation has an induced pseudo-K\"ahler structure.
\end{proposition}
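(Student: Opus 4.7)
The plan is to show that the tensors $\phi$, $g$, $\Phi$ are all basic for the Reeb foliation—invariant under $\Lie_\xi$ and horizontal in the appropriate sense—so that they descend to well-defined tensor fields $J$, $h$, $\omega$ on the local leaf space, and then verify these assemble into a pseudo-K\"ahler structure.

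For invariance, $\xi$ is Killing by Lemma \ref{lemma:nablaxi}, hence $\Lie_\xi g = 0$. Setting $X = \xi$ in the Sasaki identity $(\nabla_X \phi) Y = g(X,Y)\xi - \eta(Y) X$ from Proposition \ref{pr:SasakiEquivalent}(3) gives $\nabla_\xi \phi = 0$; combined with the torsion-free identity $[\xi, W] = \nabla_\xi W - \nabla_W \xi = \nabla_\xi W + \phi W$, one deduces $\Lie_\xi \phi = 0$, and therefore $\Lie_\xi \Phi = 0$.

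For horizontality, since $\phi$ is skew with respect to $g$ and $\phi(\xi) = 0$, we get $\xi \hook \Phi = g(\xi, \phi\cdot) = -g(\phi\xi, \cdot) = 0$. Moreover $\phi$ preserves $\xi^\perp$ and satisfies $\phi^2 = -\id$ there, so it induces an almost complex structure $J$ on the quotient; because $g(\xi,\xi) = 1$, the restriction $g|_{\xi^\perp}$ is nondegenerate and descends to a pseudo-Riemannian metric $h$; the compatibility $h(JX, JY) = h(X,Y)$ and $\omega = h(\cdot, J\cdot)$ are inherited from the corresponding Sasaki identities on $\xi^\perp$.

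Finally, closedness of $\omega$ is immediate from $\Phi = \tfrac{1}{2} d\eta$, which gives $d\Phi = 0$ and hence $d\omega = 0$ on the quotient. For integrability of $J$, given basic vector fields $\tilde X, \tilde Y$ lifting vector fields on the leaf space, $\phi \tilde X$ is basic (using $\Lie_\xi \phi = 0$) and $[\tilde X, \tilde Y]$ is basic, so $N_J$ is the projection of $N_\phi(\tilde X, \tilde Y) = -d\eta(\tilde X, \tilde Y)\,\xi$, which is vertical and projects to zero. The main bookkeeping obstacle is that the leaf space need not be a manifold globally, which is handled by working on local transverse slices to the Reeb foliation and noting that the Reeb flow intertwines these slices isometrically, as every relevant tensor is $\xi$-invariant.
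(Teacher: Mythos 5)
Your argument is correct: it is the standard proof that the Sasaki data are basic for the Reeb foliation ($\Lie_\xi g=0$ from the Killing condition, $\Lie_\xi\phi=0$ from $\nabla_\xi\phi=0$ together with $\nabla\xi=-\phi$, horizontality from $\phi\xi=0$), descend to $(h,J,\omega)$ on the local leaf space, and satisfy $d\omega=0$ via $2\Phi=d\eta$ and $N_J=0$ via the normality condition $N_\phi=-d\eta\otimes\xi$. The paper does not prove this proposition at all --- it simply cites Ogiue --- so there is nothing to compare against; your write-up supplies exactly the argument the reference is standing in for, and I see no gaps.
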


Finally, we recall that given a Sasaki structure $(\phi,\xi,\eta,g)$ and a positive constant $a$, we can define another Sasaki structure by
\[\hat\phi=\phi, \qquad \hat\xi=a^{-1}\xi, \qquad \hat\eta=a\eta, \qquad \hat g=ag+(a^2-a)\eta\otimes\eta.\]
Such a transformation is called a \emph{$\mathcal{D}$-homothety}. This defines an equivalence relation between Sasaki structures on a given manifold.

\section{Sasaki Lie algebras}
Throughout the paper, we consider left-invariant structures on Lie groups, which can be characterized at the Lie algebra level. Accordingly, we shall refer to pseudo-Riemannian metrics on a Lie algebra, Sasaki structures etc. to mean objects defined at the Lie algebra level and silently extended to the Lie group by left translation.

Recall from~\cite{ContiRossi:IndefiniteNilsolitons} that a \emph{standard decomposition} on a Lie algebra $\tilde\g$ endowed with a pseudo-Riemannian metric is an orthogonal decomposition $\tilde\g=\g\rtimes\lie a$, with $\g$ nilpotent and $\lie a$ abelian. A standard decomposition is \emph{pseudo-Iwasawa} if $\ad X$ is symmetric for all $X\in\lie a$. These definitions mimick and generalize analogous definitions for Riemannian metrics (see~\cite{Heber:noncompact}), and they have proved useful in the study of Einstein metrics (\cite{ContiRossi:IndefiniteNilsolitons}).

It is well known that nonisomorphic Lie algebras can be isometric, meaning that the corresponding pseudo-Riemannian manifolds are isometric. The method to obtain such isometries is recalled below
in Proposition~\ref{prop:pseudoAzencottWilson}. A natural question is whether one can choose a representative in an isometry class of Sasaki Lie algebras which admits a pseudo-Iwasawa decomposition. We show that this is never the case: indeed, no Sasaki Lie algebras admits a pseudo-Iwasawa decomposition. This will motivate the study of the more general standard case in the following sections.

We begin this section with an example of a standard Sasaki Lie algebra.
\begin{example}\label{example:4.3}
	\label{ex:esempiodr}
	Consider the $5$-dimensional Lie algebra
	\[\g=(0,-2 e^{12}-2 e^{34},-3 e^{45}-e^{13}+3 e^{24},3 e^{35}-3 e^{23}-e^{14},2 e^{12}+2 e^{34});\]
	with notation as in \cite{Salamon:ComplexStructures}; explicitly, we have a fixed basis $\{e_i\}$ of $\g$ such that the dual basis $\{e^i\}$ of $\g^*$ satisfies $de^1=0$, $de^2=-2 e^1\wedge e^2-2 e^{3}\wedge e^4$ and so on, with $d\colon\g^*\to\Lambda^2\g^*$ denoting the Chevalley-Eilenberg operator.
	As observed in {\cite[Example~5.6]{ConDal:KillingSpinHyper}}, the Lie algebra $\g$ carries an Einstein-Sasaki structure given by
	\begin{gather*}
		g=-e^1\otimes e^1-e^2\otimes e^2-e^3\otimes e^3-e^4\otimes e^4+e^5\otimes e^5,\\
		\xi=e_5, \qquad \Phi=e^{12}+ e^{34}.
	\end{gather*}
	This has a standard decomposition $\Span{e_1}\ltimes\Span{e_2,e_3,e_4,e_5}$. Notice that this metric can be obtained from the Riemannian $\eta$-Einstein-Sasaki metric on the Lie algebra $\g_0$ of \cite{AndFinVez:Sasaki5} by reversing the sign of the metric along the Reeb vector field.
\end{example}

Given a Lie algebra $\g$ with a metric $g$, for any endomorphism $f\colon\g\to\g$ we write $f=f^s+f^a$, where $f^s$ is symmetric and $f^a$ is skew-symmetric relative to the metric, i.e.
\[f^s=\frac12(f+f^*), \qquad f^a=\frac12(f-f^*).\]

Consider a semidirect product $\tilde \g=\g\rtimes\lie a$, with $\lie a$ abelian, and fix any metric. In
\cite[Section~1.8]{EberleinHeber} and \cite[Proposition~1.19]{ContiRossi:IndefiniteNilsolitons} it was shown that under certain conditions one can obtain an isometric Lie algebra by projecting on the symmetric part. These results assume that the decomposition is standard; however, the proof holds more generally, without assuming that the metric is standard and taking more general projections:
\begin{proposition}
	\label{prop:pseudoAzencottWilson}
	Let $\tilde \g$ be a pseudo-Riemannian Lie algebra (not necessarily standard) of the form $\tilde{\g}=\g\rtimes \lie{a}$; let $\chi\colon\lie a\to\Der(\g)$ be a Lie algebra homomorphism such that, extending $\chi(X)$ to $\tilde\g$ by declaring it to be zero on $\lie a$,
	\begin{equation}
		\label{eqn:adXstar}
		\chi(X)^s=(\ad X)^s, \qquad [\chi (X),\ad Y]=0,\ X,Y\in\lie a.
	\end{equation}
	Let $\tilde{\g}^*$ be the Lie algebra $\g\rtimes_\chi\lie{a}$. Then there is an isometry between the connected, simply connected Lie groups with Lie algebras $\tilde{\g}$ and $\tilde{\g}^*$, with the corresponding left-invariant metrics, whose differential at $e$ is the identity of $\g\oplus\lie{a}$ as a vector space.
\end{proposition}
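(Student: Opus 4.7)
The plan is to generalize the Azencott--Wilson-type argument from the pseudo-Iwasawa case (\cite[Proposition~1.19]{ContiRossi:IndefiniteNilsolitons}) by showing that the identity map on the underlying manifold $N\times A$ is an isometry, where $N$ and $A$ are the simply connected Lie groups integrating $\g$ and $\lie a$.

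Set $\delta(X):=\chi(X)-\ad X$ as endomorphisms of $\tilde\g$; since $\chi(X)$ vanishes on $\lie a$ by definition and $\lie a$ is abelian, so does $\delta(X)$, and by the hypothesis $\chi(X)^s=(\ad X)^s$ the endomorphism $\delta(X)$ is skew-symmetric with respect to $g$. Moreover $\delta(X)\in\Der(\tilde\g)$ as a difference of derivations. Using that $\lie a$ is abelian, that $\chi$ is a Lie algebra homomorphism, and the hypothesis \eqref{eqn:adXstar}, one checks that $\delta(X)$, $\delta(Y)$, $\ad X|_\g$, $\ad Y|_\g$ pairwise commute; consequently
\[ e^{\chi(X)}=e^{\ad X|_\g}\,e^{\delta(X)}, \]
and each $e^{\delta(X)}$ is an orthogonal automorphism of $(\tilde\g,g)$ which fixes $\lie a$ pointwise.

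Identify both $\tilde G$ and $\tilde G^*$ with the manifold $N\times A$ via the obvious splittings; their products differ only in the action of $A$ on $N$, which uses $\Ad(a)|_N$ in $\tilde G$ and the integrated $\chi$-action in $\tilde G^*$, with differentials at $e$ equal to $e^{\ad\log a|_\g}$ and $e^{\chi(\log a)}$ respectively. For a tangent vector at $(n,a)$ written as $(L_{n*}\tilde v,L_{a*}\tilde w)$ with $\tilde v\in\g$, $\tilde w\in\lie a$, a chain-rule calculation using that $\Ad(a^{-1})$ is a group automorphism of $N$ gives
\[ dL_{(n,a)^{-1}}(L_{n*}\tilde v,L_{a*}\tilde w)=e^{-\ad\log a|_\g}\tilde v+\tilde w\in\g\oplus\lie a, \]
and the analogous formula with $\chi$ in place of $\ad$ for $dL^*_{(n,a)^{-1}}$ in $\tilde G^*$. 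By the factorisation displayed above and the fact that $\delta(\log a)$ annihilates $\lie a$, these two pullbacks are related by the orthogonal operator $e^{-\delta(\log a)}$ on $(\tilde\g,g)$.

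Since both left-invariant metrics use the same inner product $g$ at the identity, the two pointwise values agree, so the identity map is an isometry $\tilde G\to\tilde G^*$ whose differential at $e$ is the identity on $\g\oplus\lie a$. The only real hurdle is bookkeeping: one must track the three extensions $\ad X$, $\chi(X)$, $\delta(X)$ of endomorphisms of $\g$ to $\tilde\g$ and verify that the skew-symmetry and commutation relations established at the $\g$-level persist on $\tilde\g$; this is automatic because all three vanish on $\lie a$ and $g$ can be taken as an arbitrary (possibly non-split) pseudo-Riemannian extension without affecting the argument.
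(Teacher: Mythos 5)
Your proof is correct and follows essentially the same route as the paper: the paper writes $\ad X=A(X)+\chi(X)$ with $A(X)=-\delta(X)$ a skew-symmetric derivation of $\tilde\g$ vanishing on $\lie a$, and then defers to the Azencott--Wilson-type computation of \cite[Proposition~1.19]{ContiRossi:IndefiniteNilsolitons}, which is exactly the pointwise comparison of left-invariant metrics on $N\times A$ via the orthogonal factor $e^{-\delta(\log a)}$ that you carry out explicitly. You have merely written out the details the paper delegates to the reference, including the observation (also made in the paper) that one works on $N\times A$ rather than on $\exp\g\exp\lie a$.
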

\begin{proof}
	Observe that for every $X$ in $\lie a$, $\chi(X)$ is a derivation of $\g$ that commutes with $\ad \lie a$ by~\eqref{eqn:adXstar}, and therefore a derivation of $\tilde \g$. For $X$ in $\lie a$, write $\ad X=A(X)+\chi(X)$, where $A(X)$ is an antisymmetric derivation of $\tilde\g$.
	By construction, $A(X)$ is zero on $\lie a$.
	
	The rest of the proof is identical to \cite[Proposition~1.19]{ContiRossi:IndefiniteNilsolitons}, except that one replaces $(\ad X)^a$ with $A(X)$, and one cannot assume that $\exp \g\exp\lie a$ equals the whole connected, simply-connected group $\tilde G$ with Lie algebra $\tilde\g$; however, it is clear that $\exp A(X)$ fixes the connected subgroup with Lie algebra $\lie a$, which is what is needed.
\end{proof}

As a consequence we have a result analogous to \cite[Proposition~1.19]{ContiRossi:IndefiniteNilsolitons} for nonstandard metrics:
\begin{corollary}
	\label{Cor:EasyPseudoAzencottWilson}
	Let $\tilde \g$ be a pseudo-Riemannian Lie algebra of the form $\tilde{\g}=\g\rtimes \lie{a}$; suppose that, for every $X$ in $\lie{a}$, $(\ad X)^*$ is a derivation of $\tilde\g$ vanishing on $\lie a$, and furthermore
	\begin{equation}\label{eqn:adXstarSymmetric}
		[(\ad X)^*,\ad Y]=0, \quad X,Y\in\lie a.
	\end{equation}
	Define $\chi\colon\lie a\to\Der(\g)$ as $\chi(X)=(\ad X)^s$. Let $\tilde{\g}^*$ be the solvable Lie algebra $\g\rtimes_{{\chi}}\lie{a}$.
	
	Then there is an isometry between the connected, simply connected Lie groups with Lie algebras $\tilde{\g}$ and $\tilde{\g}^*$, with the corresponding left-invariant metrics, whose differential at $e$ is the identity of $\g\oplus\lie{a}$ as a vector space.
\end{corollary}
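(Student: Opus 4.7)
The plan is to deduce the corollary directly from Proposition~\ref{prop:pseudoAzencottWilson}, choosing $\chi(X)=(\ad X)^s$ and verifying its hypotheses.

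First I would check that $\chi(X)$, viewed on $\tilde\g$, is a derivation: $\ad X$ is automatically a derivation of $\tilde\g$, and $(\ad X)^*$ is one by hypothesis, so their symmetric combination $\chi(X)$ is a derivation of $\tilde\g$. It vanishes on $\lie a$ because $\ad X$ does (since $\lie a$ is abelian) and $(\ad X)^*$ does by assumption. Moreover, for any $Y\in\tilde\g$ and $V\in\lie a$ one has
\[g(\chi(X)Y,V)=\tfrac12\,g\bigl(Y,(\ad X+(\ad X)^*)V\bigr)=0,\]
so the image of $\chi(X)$ lies in $\lie a^\perp$. Identifying $\lie a^\perp$ with $\g$ (the geometric content implicit in the hypothesis that $(\ad X)^*$ both preserves the splitting and kills $\lie a$), one sees that $\chi(X)$ restricts to a derivation of $\g$ and coincides with the zero-extension described in Proposition~\ref{prop:pseudoAzencottWilson}.

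Next I would verify that $\chi\colon\lie a\to\Der(\g)$ is a Lie algebra homomorphism. Because $\lie a$ is abelian, this reduces to $[\chi(X),\chi(Y)]=0$ for $X,Y\in\lie a$. Expanding,
\[4[\chi(X),\chi(Y)]=[\ad X,\ad Y]+[\ad X,(\ad Y)^*]+[(\ad X)^*,\ad Y]+[(\ad X)^*,(\ad Y)^*];\]
the first bracket vanishes because $\lie a$ is abelian, the two mixed terms by~\eqref{eqn:adXstarSymmetric}, and the last by taking the metric adjoint of the first. The conditions~\eqref{eqn:adXstar} then follow immediately: $\chi(X)^s=(\ad X)^s$ is tautological as $\chi(X)$ is symmetric, and $[\chi(X),\ad Y]=\tfrac12\bigl([\ad X,\ad Y]+[(\ad X)^*,\ad Y]\bigr)=0$ reuses the same two inputs.

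With all hypotheses satisfied, Proposition~\ref{prop:pseudoAzencottWilson} produces the required isometry between the simply connected groups, while solvability of $\tilde\g^*=\g\rtimes_{\chi}\lie a$ is automatic as a semidirect product with an abelian factor. The most delicate step of this plan is the verification that $\chi(X)$ actually takes values in $\Der(\g)$: it rests on pairing the image-in-$\lie a^\perp$ computation above with the geometric content of the vanishing hypothesis for $(\ad X)^*$; every other step is a short bracket expansion using \eqref{eqn:adXstarSymmetric} and abelianness of $\lie a$.
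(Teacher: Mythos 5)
Your proposal is correct and takes the same route as the paper, which states this corollary as an immediate consequence of Proposition~\ref{prop:pseudoAzencottWilson} with $\chi(X)=(\ad X)^s$ and gives no separate argument: your checks that $\chi$ is a homomorphism into the derivations and that~\eqref{eqn:adXstar} holds (using abelianness of $\lie a$, the hypothesis~\eqref{eqn:adXstarSymmetric}, and adjoints of brackets) are exactly the content being left implicit. The one point to watch is that $\operatorname{im}\chi(X)\subseteq\lie a^\perp$ does not by itself give $\chi(X)(\g)\subseteq\g$ when the decomposition is not orthogonal, but that requirement is already built into the corollary's own assertion that $\chi$ takes values in $\Der(\g)$, so this is not a gap relative to the paper's treatment.
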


\begin{example}
	\label{ex:43nontrivialcenter}
	We can apply Proposition~\ref{prop:pseudoAzencottWilson} to Example~\ref{example:4.3} with $\lie a=\Span{e_5}$,
	$\lie g=\Span{e_1, \linebreak[0] e_2-e_5,\allowdisplaybreaks e_3,e_4}$ to obtain an isometric Lie algebra
	\begin{gather*}
		\tilde\g=(0,-2 e^{12}-2 e^{34},- e^{13},- e^{14},2 e^{12}+2 e^{34}),\\
		g=-e^1\otimes e^1-e^2\otimes e^2-e^3\otimes e^3-e^4\otimes e^4+e^5\otimes e^5,\\
		\xi=e_5, \qquad \Phi=e^{12}+ e^{34}.
	\end{gather*}
	This can be written as $\Span{e_2,e_3,e_4,e_5}\rtimes\Span{e_1}$, with
	\[\Span{e_2,e_3,e_4,e_5}\cong (-2E^{23},0,0,2E^{23})\]
	and
	\[\ad e_1=2e^2\otimes (e_2-e_5) +e^3\otimes e_3+e^4\otimes e_4.\]
	This is standard but not pseudo-Iwasawa, consistently with Proposition~\ref{prop:sasakinotpseudoiwasawa} below.
\end{example}

In the following, we will need the explicit formula for the Levi-Civita connection of a metric on a Lie algebra, namely
\begin{equation}
	\label{eqn:lc}
	\nabla_w v=-\ad(v)^s w-\frac12 (\ad w)^* v.
\end{equation}
The formula follows immediately from the Koszul formula. In order to specialize to the standard case, we will need to fix an orthogonal basis $\{e_s\}$ on the abelian factor $\lie a$ such that $\tilde g(e_s,e_s)=\epsilon_s$.
\begin{lemma}
	\label{lemma:lcpseudoiwasawa}
	Let $\tilde\g$ be a Lie algebra with a standard decomposition $\tilde\g=\g\oplus\lie{a}$. Then
	\[\widetilde\nabla_HX=\widetilde{\ad}(H)^a(X), \qquad \widetilde\nabla_X H=-\widetilde{\ad}(H)^s(X),
	\]
	for all $H\in\lie a$, $X\in\tilde\g$. In addition, if $\{e_i\}$ is an orthogonal basis of $\lie a$ and $v,w\in\g$, we have
	\[\widetilde\nabla_w v = -\ad(v)^s w-\frac12 (\ad w)^* v +\sum_s \epsilon_s \tilde g(\widetilde\ad(e_s)^s v,w)e_s, \quad v,w\in\g.\]
\end{lemma}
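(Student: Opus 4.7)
The plan is to derive all three formulas from the general Lie-algebra Levi-Civita formula~\eqref{eqn:lc}, applied on $\tilde\g$, by exploiting the structural features of a standard decomposition: $\g$ is an ideal, $\lie a$ is abelian, and $\g\perp\lie a$. The orthogonality (together with non-degeneracy of $\tilde g$) forces the restrictions of the metric to $\g$ and to $\lie a$ to be non-degenerate, so the orthogonal basis $\{e_s\}$ of $\lie a$ gives a well-defined projector $H=\sum_s\epsilon_s\tilde g(H,e_s)e_s$ on the $\lie a$-component, which is what makes the sum in the last formula meaningful.

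The first step I would carry out is a short lemma-within-the-proof showing that $(\widetilde\ad X)^*H=0$ for every $X\in\tilde\g$ and $H\in\lie a$. This follows by pairing with test vectors: for $Y\in\g$ one has $\tilde g(H,[X,Y])=0$ because $[X,Y]\in\g$ and $\g\perp\lie a$; for $Y\in\lie a$ one writes $[X,Y]=[X_\g,Y]\in\g$ by the same ideal/abelian argument, and again $H$ is orthogonal to $\g$. Once this is in place, plugging $w=H\in\lie a$ into~\eqref{eqn:lc} gives
\[\widetilde\nabla_H X=-\widetilde\ad(X)^sH-\tfrac12(\widetilde\ad H)^*X,\]
and since $(\widetilde\ad X)^*H=0$ the first summand collapses to $-\tfrac12\widetilde\ad(X)H=\tfrac12\widetilde\ad(H)X$, yielding $\widetilde\nabla_HX=\widetilde\ad(H)^aX$. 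Plugging $v=H$ into~\eqref{eqn:lc} gives $\widetilde\nabla_XH=-\widetilde\ad(H)^sX-\tfrac12(\widetilde\ad X)^*H=-\widetilde\ad(H)^sX$, which is the second identity.

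For the third identity I would take $v,w\in\g$ in~\eqref{eqn:lc} and compute how $\widetilde\ad(v)^s$ and $(\widetilde\ad w)^*$ on $\tilde\g$ differ from their analogues $\ad(v)^s$ and $(\ad w)^*$ on $\g$. The key observation is that the brackets $[v,w]$, $[v,Y]$, $[w,Y]$ all lie in $\g$ when $Y\in\g$, so the $\g$-components of $(\widetilde\ad v)^*w$ and $(\widetilde\ad w)^*v$ agree with $(\ad v)^*w$ and $(\ad w)^*v$ respectively. The $\lie a$-components, however, are nonzero: for any $e_s$,
\[\tilde g\bigl((\widetilde\ad v)^*w,e_s\bigr)=\tilde g(w,[v,e_s])=-\tilde g(w,\widetilde\ad(e_s)v),\]
and analogously for $(\widetilde\ad w)^*v$. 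Substituting these decompositions back into~\eqref{eqn:lc} and combining the two correction terms, one recognises the symmetric combination
\[\tilde g(w,\widetilde\ad(e_s)v)+\tilde g(v,\widetilde\ad(e_s)w)=2\tilde g(\widetilde\ad(e_s)^sv,w),\]
which is exactly what is needed to produce the stated sum $\sum_s\epsilon_s\tilde g(\widetilde\ad(e_s)^sv,w)e_s$.

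I do not expect any real obstacle: the computation is a bookkeeping exercise once one separates things by the decomposition. The only point deserving care is the sign arrangement in the last step and ensuring that the operators $\ad(v)^s$ and $(\ad w)^*$ appearing in the statement are read as operators on $\g$ rather than $\tilde\g$, which is precisely why the correction term in $\lie a$ arises.
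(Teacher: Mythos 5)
Your proposal is correct and follows essentially the same route as the paper: apply the Lie-algebra Levi-Civita formula~\eqref{eqn:lc} on $\tilde\g$ and track the $\lie a$-components of the adjoints, using that $\g$ is an orthogonal ideal and $\lie a$ is abelian. The only difference is presentational: you isolate the identity $(\widetilde\ad X)^*H=0$ as an explicit intermediate step, which the paper uses implicitly.
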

\begin{proof}
	If we apply~\eqref{eqn:lc} to $\widetilde\nabla$, we get
	\begin{gather*}
		\begin{split}
			\widetilde\nabla_H X&=-\widetilde \ad(X)^sH-\frac12(\widetilde \ad H)^*X\\
			&=-\frac12 \widetilde \ad(X)H-\frac12 \widetilde \ad(X)^*H-\frac12 \widetilde \ad(H)^*X=\widetilde{\ad}(H)^a(X),
		\end{split}\\
		\widetilde\nabla_X H=-\widetilde \ad(H)^sX-\frac12(\widetilde \ad X)^*H=-\widetilde \ad(H)^sX.
	\end{gather*}
	Now observe that $\widetilde\ad( v)^*w=\ad(v)^*w + \sum_s\epsilon_s\tilde g([v,e_s],w)e_s$. Therefore,
	\begin{align*}
		\widetilde\nabla_w v&=-\frac12\widetilde\ad(v)w-\frac12\widetilde\ad(v)^*w-\frac12\widetilde\ad( w)^*v\\
		&=-\frac12\ad(v)w-\frac12\ad(v)^*w-\frac12\ad( w)^*v\\
		&\quad-\frac12\sum_s\epsilon_s\tilde g([v,e_s],w)e_s
		-\frac12\sum_s\epsilon_s\tilde g([w,e_s],v)e_s\\
		&=-\ad(v)^sw-\frac12\ad( w)^*v\\
		&\quad+\frac12\sum_s\epsilon_s\bigl(\tilde g(\ad (e_s)v,w)+\tilde g(\ad(e_s)^*v,w)\bigr)e_s.\qedhere
	\end{align*}
\end{proof}

We can now prove the following:
\begin{proposition}
	\label{prop:sasakinotpseudoiwasawa}
	Let $\tilde\g$ be a solvable Lie algebra with a Sasaki pseudo-Riemannian metric $g$. Then there is no pseudo-Iwasawa decomposition.
\end{proposition}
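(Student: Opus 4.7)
The plan is to suppose for contradiction that $\tilde\g$ admits a pseudo-Iwasawa decomposition $\tilde\g = \g \rtimes \lie a$ (with $\lie a$ nontrivial), and derive a contradiction by combining Lemma~\ref{lemma:lcpseudoiwasawa} with the Sasaki identity $\widetilde\nabla_X\xi = -\phi X$ of Lemma~\ref{lemma:nablaxi}(1). As a first step, the pseudo-Iwasawa condition forces $\widetilde{\ad}(H)^a = 0$ for each $H \in \lie a$, so Lemma~\ref{lemma:lcpseudoiwasawa} gives $\widetilde\nabla_H X = 0$ for every $X \in \tilde\g$. Specializing to $X = \xi$ and comparing with the Sasaki identity yields $\phi(H) = 0$ for all $H\in\lie a$; the remark that the generalized zero-eigenspace of $\phi$ equals $\Span{\xi}$ then forces $\lie a \subseteq \Span{\xi}$, and nontriviality of $\lie a$ implies $\lie a = \Span{\xi}$, so in particular $\xi \in \lie a$.

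The second step uses the dual formula of Lemma~\ref{lemma:lcpseudoiwasawa}: for $X \in \g$, $\widetilde\nabla_X \xi = -\widetilde{\ad}(\xi)^s(X) = -[\xi, X]$ by pseudo-Iwasawa, which together with $\widetilde\nabla_X\xi = -\phi X$ produces the identity $\phi(X) = \ad\xi(X)$ on $\g$. Since $\g$ is an ideal and $\xi \in \lie a$, both $\phi|_\g$ and $\ad\xi|_\g$ are well-defined endomorphisms of $\g$; with respect to the nondegenerate restriction $g|_\g$, the former is skew-symmetric while the latter is symmetric (inheriting these properties from $\tilde\g$ via the orthogonal splitting), so their equality forces $\phi|_\g = 0$. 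Evaluating the contact identity $\phi^2 = -\id + \eta\otimes\xi$ on $X \in \g$ then gives $0 = -X + \eta(X)\xi = -X$, since $\eta(X) = g(\xi, X) = 0$ by orthogonality; hence $\g = 0$ and $\tilde\g = \Span{\xi}$ is one-dimensional, contradicting $\dim\tilde\g \ge 3$.

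The main obstacle I anticipate is a clean formulation of the restriction arguments in the second step: verifying that $\phi$ preserves $\g$ (immediate from $\phi X = [\xi, X] \in \g$ for $X \in \g$) and that the pseudo-Iwasawa symmetry of $\widetilde{\ad}\,\xi$ on $\tilde\g$ descends to symmetry of $\ad\xi|_\g$ with respect to $g|_\g$ (a short consequence of the orthogonal ideal splitting). The argument implicitly assumes $\lie a \neq 0$, which carries the nontrivial content; the trivial case $\lie a = 0$, in which $\tilde\g$ would be nilpotent, must be excluded by the convention that a pseudo-Iwasawa decomposition is nontrivial.
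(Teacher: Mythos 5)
Your argument is correct and follows essentially the same route as the paper: both use Lemma~\ref{lemma:lcpseudoiwasawa} together with $\widetilde\nabla\xi=-\phi$ to get $\phi|_{\lie a}=0$, hence $\lie a=\Span{\xi}$, and then derive a contradiction from $\phi$ being skew-symmetric while $\widetilde{\ad}(\xi)$ is symmetric. You merely spell out the final step (restricting to $\g$ and invoking $\phi^2=-\id+\eta\otimes\xi$) more explicitly than the paper does, and your remark that the case $\lie a=0$ must be excluded is a legitimate observation that the paper leaves implicit.
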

\begin{proof}
	Assume for a contradiction that $\tilde\g=\g\oplus\lie a$ is a pseudo-Iwasawa decomposition. Then by Lemma~\ref{lemma:lcpseudoiwasawa} and Lemma~\ref{lemma:nablaxi} we have
	\[0=\widetilde\nabla_H\xi=-\phi(H), \quad H\in\lie a.\]
	This implies that $\lie a$ is one-dimensional and spanned by $\xi$. We have
	\[-\phi X=\widetilde\nabla_X \xi =-\widetilde\ad(\xi)X.\]
	However $\phi$ is skew-symmetric, while $\widetilde{\ad}(\xi)$ is symmetric, giving a contradiction.
\end{proof}

\section{Sasaki structures on rank-one standard Lie algebras}
In this section we consider standard decompositions of rank one, meaning that the abelian factor $\lie a$ is one-dimensional. Accordingly, $\tilde \g$ will be a solvable Lie algebra endowed with a standard decomposition $\g\rtimes_D\Span{e_0}$, with $D$ a derivation of $\g$ and $\ad e_0=D$; we will denote by $[\,,\,]$ and $d$ the Lie bracket and exterior derivative on $\g$.

\begin{lemma}
	\label{lemma:acstandard}
	Let $\g$ be a nilpotent Lie algebra with a pseudo-Riemannian metric $g$, let $D$ be a derivation, and let $\tau=\pm1$. Then $\tilde\g=\g\rtimes_D\Span{e_0}$ has an almost contact metric structure $(\phi,\xi,\eta,\tilde g)$ such that
	\[\tilde g=g+\tau e^0\otimes e^0, \qquad\widetilde\nabla \xi=-\phi\]
	if and only if $\xi\in\g$ and, writing $b=D^a(\xi)$, for all $u,w\in\g$
	\begin{gather}\label{eqn:acphi}
		\phi(w)=\frac12 (\ad w)^*(\xi)+\tau g(b,w)e_0, \qquad \phi(e_0)=-b,\\
		\label{eqn:acxi} D(\xi)=0, \qquad (\ad \xi)^s=0,\qquad (\ad b)^*(\xi)=0,\\
		\label{eqn:acg} g(w,u)=g(\xi,w)g(\xi,u)+\tau g(b,w)g(b,u)+\frac14g( (\ad w)^*\xi,(\ad u)^*\xi).
	\end{gather}
\end{lemma}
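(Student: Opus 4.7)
My plan is to treat the two directions of the equivalence separately, with the forward implication carrying most of the substance. For the forward direction, I would first use Lemma~\ref{lemma:lcpseudoiwasawa} together with the identity $\widetilde\nabla_X\xi=-\phi(X)$ to rewrite
\[\phi(X)=\widetilde\ad(\xi)^s X+\tfrac12\,\widetilde\ad(X)^*\xi.\]
The second summand is automatically $\tilde g$-skew in $X$ because $\tilde g(\widetilde\ad(X)^*\xi,Y)=\tilde g(\xi,[X,Y])$ is antisymmetric in $X$ and $Y$, so the skew-symmetry of $\phi$ forces $\widetilde\ad(\xi)^s=0$, i.e.\ $\widetilde\ad(\xi)$ is $\tilde g$-skew on all of $\tilde\g$. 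Decomposing $\xi=\xi_\g+\alpha e_0$ with $\xi_\g\in\g$ and $\alpha\in\R$, and using that $\widetilde\ad(\xi)$ preserves the orthogonal splitting $\g\oplus\Span{e_0}$ (with $\widetilde\ad(\xi)(e_0)=-D(\xi_\g)\in\g$), I read off $D(\xi_\g)=0$ and $\ad(\xi_\g)^s=-\alpha D^s$. Specializing the formula to $X=e_0$ gives $\phi(e_0)=-b$, where $b:=D^a(\xi_\g)$.

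The key step is then to show that $\alpha=0$. Computing $\phi^2(e_0)=-\phi(b)$ from $\phi(w)=\tfrac12\widetilde\ad(w)^*\xi$ and comparing its $\g$-component with that of $-e_0+\eta(e_0)\xi=\tau\alpha\xi_\g+(\tau\alpha^2-1)e_0$ yields the identity
\[(\ad b)^*\xi_\g=-2\tau\alpha\,\xi_\g.\]
Now $\g$ is nilpotent, so $\ad(b)$ is a nilpotent endomorphism of $\g$; its $g$-adjoint $(\ad b)^*$ is therefore also nilpotent, and nilpotent operators admit $0$ as their only eigenvalue. Hence $\xi_\g=0$ or $\alpha=0$. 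In the degenerate case $\xi_\g=0$ we would have $\xi=\alpha e_0$ with $\tau\alpha^2=1$, and then $\widetilde\ad(\xi)^s=0$ forces $D^s=0$, so $\phi$ vanishes identically, contradicting $\phi^2 w=-w$ for any nonzero $w\in\g$. Consequently $\xi\in\g$. Formula \eqref{eqn:acphi} then follows by rewriting $\phi(w)=\tfrac12\widetilde\ad(w)^*\xi$ using $D\xi=0$ and the identity $g(\xi,Dw)=-2g(b,w)$; the relations in \eqref{eqn:acxi} collect the facts derived above (with $\alpha=0$), and \eqref{eqn:acg} is just the almost contact metric compatibility $\tilde g(\phi w,\phi u)=g(w,u)-g(\xi,w)g(\xi,u)$ restricted to $w,u\in\g$.

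For the converse, I would take \eqref{eqn:acphi} (extended by $\phi(e_0)=-b$) and $\eta=\tilde g(\xi,\cdot)$ as definitions and check each almost contact metric axiom directly. Skew-symmetry of $\phi$ follows from $(\ad b)^*\xi=0$ and $(\ad\xi)^s=0$; the identities $\phi(\xi)=0$ and $\eta\circ\phi=0$ are immediate from \eqref{eqn:acxi}; the equation $\phi^2=-\id+\eta\otimes\xi$ on $\g$ becomes, after pairing both sides with an arbitrary element of $\g$, a tautology once \eqref{eqn:acg} is applied, while on $e_0$ it reduces to the scalar relation $g(b,b)=\tau$, obtained by specializing \eqref{eqn:acg} to $u=w=b$. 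Finally, $\widetilde\nabla\xi=-\phi$ is re-read off Lemma~\ref{lemma:lcpseudoiwasawa}. The main obstacle throughout is the $\alpha=0$ step, which is where nilpotency of $\g$ enters essentially; once $\xi$ is known to lie in $\g$, the rest reduces to routine algebra with the Koszul formula on a rank-one standard decomposition.
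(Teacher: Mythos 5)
Your argument is essentially the paper's, reorganized slightly, and it is correct where the paper's is. The paper writes $\xi=v+ae_0$, imposes skew-symmetry of $\phi$ componentwise to obtain $D(v)=0$ and $\ad(v)^s+aD^s=0$, and then derives the eigenvalue identity $(\ad D^a(v))^*v=-2a\tau v$ from the compatibility $\tilde g(\phi w,\phi e_0)=\tilde g(w,e_0)-\eta(w)\eta(e_0)$; you package the first two conditions into the single statement $\widetilde\ad(\xi)^s=0$ on all of $\tilde\g$ and reach the identical identity via $\phi^2(e_0)=-e_0+\eta(e_0)\xi$, which is equivalent to the compatibility once $\phi$ is skew. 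Nilpotency then kills the eigenvalue in both treatments. Your explicit disposal of the degenerate case $\xi_\g=0$ is a point the paper passes over silently, and is a genuine improvement. One step in your converse is stated too optimistically: specializing \eqref{eqn:acg} to $u=w=b$ gives only $g(b,b)=\tau g(b,b)^2$, i.e.\ $g(b,b)\in\{0,\tau\}$; to conclude $g(b,b)=\tau$ you should take $w=b$ with $u$ arbitrary (as the paper does), which yields $(1-\tau g(b,b))g(b,u)=0$ for all $u$ and hence $g(b,b)=\tau$ provided $b\neq0$ --- a nonvanishing that is automatic in the forward direction (since $\phi^2e_0=-e_0$ forces $\phi(e_0)\neq0$) but that, strictly speaking, both you and the paper leave implicit when proving the converse.
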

\begin{proof}
	Given $\tilde g=g+\tau e^0\otimes e^0$ and $\xi\in\tilde\g$, define $\eta=\xi^\flat$ and $\phi=-\tilde\nabla\xi$.
	
	Write
	\[\xi=v+ae_0, \quad v\in\g, a\in\R.\]
	By Lemma~\ref{lemma:lcpseudoiwasawa}, we have
	\begin{gather*}
		\widetilde\nabla_w\xi = \widetilde\nabla_wv + a\widetilde\nabla_w e_0=
		-\ad(v)^sw-\frac12(\ad w)^*v+\tau\tilde g(D^s(w),v)e_0 - aD^s(w),\\
		\widetilde\nabla_{e_0}\xi=D^a(v).
	\end{gather*}
	Since $\widetilde\phi(X)=-\widetilde\nabla_X\xi$, we can write
	\begin{gather*}
		\phi(w)=\ad(v)^sw+\frac12(\ad w)^*v -\tau\tilde g(D^s(w),v)e_0 + aD^s(w),\\
		\phi(e_0)=-D^a(v).
	\end{gather*}
	This determines an almost-contact metric structure if
	and only if $\phi$ is skew-symmetric and
	\begin{equation}
		\label{eqn:gtildealmostcontact}
		\tilde g(X,Y)-\eta(X)\eta(Y)=\tilde g(\phi X,\phi Y).
	\end{equation}
	The skew-symmetric condition implies
	\[
	0=\tilde g(\phi(w),e_0)+\tilde g(\phi(e_0),w)
	=-\tau^2\tilde g(D^s(w),v)-\tilde g(D^a(v),w)=-\tilde g(D(v),w)\]
	for all $w$ in $\g$, giving $D(v)=0$. In addition,
	\begin{multline*}
		0=\tilde g(\phi(w),u)+\tilde g(\phi(u),w)\\
		=g(\ad(v)^sw,u)+g(\ad(v)^su,w)+\frac12g((\ad w)^*v,u)\\
		+\frac12g((\ad u)^*v,w)
		+ ag(D^s(w),u) + ag(D^s(u),w)\\
		=2g(\ad(v)^sw,u)+2ag(D^s(w),u),
	\end{multline*}
	giving $\ad(v)^s+aD^s=0$ and
	\[\phi(w)=\frac12 (\ad w)^*(v)-\tau g(D^s(v),w)e_0
	=\frac12 (\ad w)^*(v)+\tau g(D^a(v),w)e_0.
	\]
	Evaluating~\eqref{eqn:gtildealmostcontact} on $w,e_0$ we get
	\begin{multline*}
		-a\tau g(v,w)=\tilde g(w,e_0)-\eta(w)\eta(e_0)=\tilde g(\phi (w),\phi (e_0))\\
		=\tilde g(\frac12 (\ad w)^*(v)+\tau g(D^a(v),w)e_0,-D^a(v))\\
		= g(\frac12 (\ad w)^*v+\tau g(D^a(v),w)e_0,-D^a(v))\\
		=-\frac12 g( (\ad w)^*v,D^a(v))=-\frac12 g(v,[w,D^a(v)]=\frac12g(w,(\ad D^a(v))^*v).
	\end{multline*}
	This holds for all $w$ if and only if $(\ad D^a(v))^*v=-2a\tau v$. Since $\g$ is nilpotent, the operator $\ad D^a(v)$ and its transpose are nilpotent, so $a=0$ and $(\ad D^a(v))^*v=0$. Therefore, $\xi=v$, $b=D^a(v)$ and $(\ad b)^*v=0$, showing that $\phi$ takes the form~\eqref{eqn:acphi} and $\xi$ satisfies~\eqref{eqn:acxi}.
	Evaluating~\eqref{eqn:gtildealmostcontact} on $w,u$ gives
	\begin{multline*}
		g(w,u)-g(w,\xi)g(u,\xi)=\tilde g(\phi (w),\phi (u))\\
		=g(\frac12 (\ad w)^*\xi+\tau g(b,w)e_0,
		\frac12 (\ad u)^*\xi+\tau g(b,u)e_0)\\
		=\frac14g((\ad w)^*\xi,(\ad u)^*(\xi))+\tau g(b,w)g(b,u),
	\end{multline*}
	proving~\eqref{eqn:acg}.
	
	Lastly, evaluating~\eqref{eqn:gtildealmostcontact} on $e_0,e_0$ we get
	\[
	\tau =\tilde g(e_0,e_0)-\eta(e_0)\eta(e_0)=\tilde g(-b,-b)=g(b,b);\]
	however, this is a redundant condition, for $g(b,\xi)=g(D^a(\xi),\xi)=0$, so~\eqref{eqn:acg} and~\eqref{eqn:acxi} imply
	$g(b,u)=\tau g(b,b)g(b,u)$ for all $u$, which is equivalent to $g(b,b)=\tau$.
	
	The converse is proved in the same way.
\end{proof}

Now observe that we can write
\[g((\ad w)^*(v),u)=g(v,[w,u])=-dv^\flat(w,u)=-g((w\hook d v^\flat)^\sharp,u),\]
so $(\ad w)^*(\xi) = -(w\hook d\eta)^\sharp$. Recall that $d$ denotes the Chevalley-Eilenberg operator on $\g$, not $\tilde\g$.

\begin{lemma}
	\label{lemma:derivativeoftwoform}
	Let $g$ be a metric on a Lie algebra $\g$. Let $\Phi$ be a $2$-form. Then
	\[\nabla_x \Phi=\frac12\Lie_x\Phi-\frac12(\ad x)^*\Phi+\frac12\alpha^\Phi_x,\]
	where
	\[\alpha^\Phi_x(u,w)= \Phi(\ad (u)^*(x),w)-\Phi(\ad (w)^*(x),u).\]
\end{lemma}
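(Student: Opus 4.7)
The plan is to compute $(\nabla_x\Phi)(u,w)$ directly from the Levi-Civita formula~\eqref{eqn:lc} and reorganize the resulting terms into the three claimed pieces. Since we work at the Lie algebra level, $\Phi$ is a constant (left-invariant) $2$-form, so
\[(\nabla_x\Phi)(u,w)=-\Phi(\nabla_x u,w)-\Phi(u,\nabla_x w).\]
Substituting $\nabla_x u=-\ad(u)^s x-\tfrac12(\ad x)^*u$ and the analogous expression for $\nabla_x w$ produces four pairs of terms. I would then split $\ad(u)^s=\tfrac12(\ad u+(\ad u)^*)$ and use $\ad(u)x=-\ad(x)u$ to bring everything into a form where $x$ appears in the outer slot.

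After this rewrite, the terms group into three natural blocks. The first block is
\[-\tfrac12\bigl[\Phi(\ad(x)u,w)+\Phi(u,\ad(x)w)\bigr],\]
which is exactly $\tfrac12\Lie_x\Phi(u,w)$: on left-invariant tensors the derivation term in the Lie derivative vanishes, leaving $\Lie_x\Phi(u,w)=-\Phi([x,u],w)-\Phi(u,[x,w])$. The second block is
\[\tfrac12\bigl[\Phi((\ad x)^*u,w)+\Phi(u,(\ad x)^*w)\bigr],\]
which equals $-\tfrac12((\ad x)^*\Phi)(u,w)$ under the standard derivation action of an endomorphism $A$ on a $2$-form, $(A\cdot\Phi)(u,w)=-\Phi(Au,w)-\Phi(u,Aw)$. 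The remaining block is
\[\tfrac12\bigl[\Phi((\ad u)^*x,w)+\Phi(u,(\ad w)^*x)\bigr],\]
which by antisymmetry of $\Phi$ becomes $\tfrac12\bigl[\Phi((\ad u)^*x,w)-\Phi((\ad w)^*x,u)\bigr]=\tfrac12\alpha^\Phi_x(u,w)$, matching the definition in the statement.

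The computation is essentially bookkeeping; the only points requiring care are the sign conventions for the induced action of $(\ad x)^*$ on $\Lambda^2\g^*$ and the observation that on left-invariant objects the point-evaluation term of $\Lie_x$ drops out. No further ingredients beyond~\eqref{eqn:lc} and the definitions are needed.
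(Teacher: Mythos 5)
Your proposal is correct and follows essentially the same route as the paper: expand $(\nabla_x\Phi)(u,w)=-\Phi(\nabla_x u,w)-\Phi(u,\nabla_x w)$ via~\eqref{eqn:lc}, split $\ad(u)^s$ into its bracket and adjoint parts, and regroup the six resulting terms into the Lie-derivative, $(\ad x)^*\Phi$, and $\alpha^\Phi_x$ blocks, with the same sign conventions the paper uses elsewhere. Nothing is missing.
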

\begin{proof}
	Using~\eqref{eqn:lc} we have:
	\begin{multline*}
		\nabla_x\Phi(u,w)=-\Phi(\nabla_x u,w)-\Phi(u,\nabla_x w)\\
		=\frac12\bigr(\Phi((\ad x)^*u+(\ad u)x+(\ad u)^*x,w)-\Phi((\ad x)^*w+(\ad w)x+(\ad w)^*x,u)\bigl)\\
		=-\frac12(\ad x)^*\Phi(u,w) -\frac12\Phi(\Lie_x u,w)+\frac12\Phi(\Lie_x w,u)
		+\frac12\alpha^\Phi_x(u,w)\\
		=-\frac12(\ad x)^*\Phi(u,w) +\frac12\Lie_x\Phi( u,w)
		+\frac12\alpha^\Phi_x(u,w).\qedhere
	\end{multline*}
\end{proof}

\begin{proposition}
	\label{prop:sasakistandard}
	Let $\g$ be a nilpotent Lie algebra with a pseudo-Riemannian metric $g$, let $D$ be a derivation and $\tau=\pm1$. Then $\tilde\g=\g\rtimes_D\Span{e_0}$ has a Sasaki structure $(\phi,\xi,\eta,\tilde g)$ such that
	$\tilde g=g+\tau e^0\otimes e^0$
	if and only if for some $\xi\in\g$, $b=D^a(\xi)$, $\eta=\xi^\flat$, writing
	\[\alpha_x(u,w)= d\eta(\ad (u)^*(x),w)
	-d\eta(\ad (w)^*(x),u),\]
	the following hold for $x,y\in\g$:
	\begin{gather}
		\label{eqn:sasakixi}
		D(\xi)=0, \qquad (\ad \xi)^s=0,\qquad (\ad b)^*(\xi)=0,\\
		\label{eqn:sasakiD}
		D^a(d\eta)=0, \qquad D^a(b)=-\tau\xi,\\
		\label{eqn:sasakietax}
		\eta\wedge x^\flat =\frac14\alpha_x -\frac14(\ad x)^*(d\eta)
		+\frac14d(\Lie_x\eta)+\tau b^\flat \wedge D^s(x)^\flat,\\
		\label{eqn:sasakib}
		D^s(x)\hook d\eta + x\hook db^\flat + b\hook dx^\flat + [x,b]^\flat=0.
	\end{gather}
	Then $\phi$ is given by
	\[\phi(w)=\frac12 (\ad w)^*(\xi)+\tau g(b,w)e_0, \qquad \phi(e_0)=-b, \quad w\in\g.\]
\end{proposition}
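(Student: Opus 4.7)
The plan is to apply Lemma~\ref{lemma:acstandard} to reduce to the almost contact metric setting and then impose the Sasaki condition in the form $\widetilde\nabla_X\widetilde\Phi = \widetilde\eta\wedge X^\flat$ from Proposition~\ref{pr:SasakiEquivalent}(4). By Lemma~\ref{lemma:nablaxi}(1), the Sasaki condition implies $\widetilde\nabla\xi = -\phi$, so Lemma~\ref{lemma:acstandard} applies in both directions, yielding the explicit form of $\phi$ and the equations in~\eqref{eqn:sasakixi}. A short computation from~\eqref{eqn:acphi} gives
\[
\widetilde\Phi = \tfrac12 d\eta + e^0\wedge b^\flat.
\]

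From Lemma~\ref{lemma:lcpseudoiwasawa} one reads off $\widetilde\nabla_{e_0}v = D^a v$, $\widetilde\nabla_{e_0}e_0 = 0$ for $v\in\g$, together with $\widetilde\nabla_x y = -\ad(y)^s x - \tfrac12(\ad x)^* y + \tau g(D^s y, x) e_0$ and $\widetilde\nabla_x e_0 = -D^s x$ for $x, y\in\g$. One then evaluates both sides of $\widetilde\nabla_X\widetilde\Phi = \widetilde\eta\wedge X^\flat$ on the three essentially distinct types of pairs $(Y,Z)$, with $X$ either $e_0$ or in $\g$.

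The case $X = e_0$ is straightforward. On $(y, z)\in\g\times\g$ the identity reduces to $d\eta(D^a y, z) + d\eta(y, D^a z) = 0$, i.e.\ $D^a(d\eta) = 0$; on $(e_0, w)$, using $\widetilde\nabla_{e_0}e_0 = 0$, it reduces to $g(D^a b, w) = -\tau g(\xi, w)$, hence $D^a(b) = -\tau\xi$. Together these give~\eqref{eqn:sasakiD}. With $X = x \in \g$ and $(Y,Z) = (e_0, w)$, one unpacks the scalar products obtained from $\widetilde\nabla_x$ via the identities $g(b,[w,x]) = (x\hook db^\flat)(w)$, $g([w,b],x) = (b\hook dx^\flat)(w)$, arriving at~\eqref{eqn:sasakib} as a 1-form equation.

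The main technical step is $X = x\in\g$ evaluated on $(y, z)\in\g\times\g$. Expanding $(\widetilde\nabla_x\widetilde\Phi)(y, z)$ and sorting contributions, the $e_0$-components $\tau g(D^s y,x)e_0$ combine into $\tau(b^\flat\wedge D^s(x)^\flat)(y,z)$; the $(\ad x)^*y, (\ad x)^*z$ contributions give $-\tfrac14((\ad x)^* d\eta)(y,z)$ once one recognises that the coadjoint action of $\ad x$ on $\Lambda^2\g^*$ extends as a derivation; and the $\ad(y)^s x$, $\ad(z)^s x$ contributions split via $\ad(\cdot)^s = \tfrac12\ad(\cdot) + \tfrac12\ad(\cdot)^*$ into $\tfrac14\alpha_x(y,z)$ together with $\tfrac14\bigl(d\eta([y,x],z) - d\eta([z,x],y)\bigr)$. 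The Jacobi identity, equivalently $d^2\eta=0$, combined with the observation that $\Lie_x\eta = x\hook d\eta$ since $\eta(x)$ is a constant, identifies the last bracket with $\tfrac14 d(\Lie_x\eta)(y,z) = \tfrac14(\Lie_x d\eta)(y,z)$. Setting the sum equal to $(\eta\wedge x^\flat)(y,z)$ produces exactly~\eqref{eqn:sasakietax}, and the converse is obtained by reading the same chain of equalities backwards, since all steps are equivalences.
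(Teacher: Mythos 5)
Your overall strategy coincides with the paper's: reduce to Lemma~\ref{lemma:acstandard} via $\widetilde\nabla\xi=-\phi$, write $\widetilde\Phi=\tfrac12 d\eta+e^0\wedge b^\flat$, and unpack $\widetilde\nabla_X\widetilde\Phi=\eta\wedge X^\flat$ on the three types of pairs. The forward direction as you outline it is correct and matches the paper's computation essentially term for term, including the use of the Jacobi identity to produce the $d(\Lie_x\eta)$ term.

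The gap is in the converse. Your chain of equivalences shows that the Sasaki condition is equivalent to the conjunction of \eqref{eqn:acphi}, \eqref{eqn:acxi}, \eqref{eqn:acg} (from Lemma~\ref{lemma:acstandard}) with \eqref{eqn:sasakiD}--\eqref{eqn:sasakib}. But the statement omits the metric compatibility condition~\eqref{eqn:acg}, namely $g(w,u)=g(\xi,w)g(\xi,u)+\tau g(b,w)g(b,u)+\frac14 g((\ad w)^*\xi,(\ad u)^*\xi)$, from its list of hypotheses. In the converse direction you are handed only \eqref{eqn:sasakixi}--\eqref{eqn:sasakib} and the formula for $\phi$; without \eqref{eqn:acg} you cannot invoke Lemma~\ref{lemma:acstandard} to conclude that $(\phi,\xi,\eta,\tilde g)$ is an almost contact metric structure, and Proposition~\ref{pr:SasakiEquivalent} --- which presupposes an almost contact metric structure --- is then not applicable, so ``reading the chain backwards'' stalls at the very first step. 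The missing argument is to show that \eqref{eqn:acg} follows from the stated conditions: contract \eqref{eqn:sasakietax} with $(u,\xi)$, use $(\ad\xi)^s=0$ to get $\xi\hook d\eta=0$, and use $D(\xi)=0$ to replace $g(D^s(x),\xi)$ by $-g(x,b)$; this recovers \eqref{eqn:acg} and closes the converse. This is precisely the step the paper carries out explicitly, and it is the one place where the proof is not a mechanical reversal of the forward computation.
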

\begin{proof}
	Suppose $(\phi,\xi,\eta,\tilde g)$ is a Sasaki structure as in the hypothesis. Since Sasaki structures satisfy $\widetilde\nabla_X\xi=-\phi(X)$, by Lemma~\ref{lemma:acstandard} equations~\eqref{eqn:acphi}, \eqref{eqn:acxi},~\eqref{eqn:acg} hold. By Proposition~\ref{pr:SasakiEquivalent}, the Sasaki condition implies
	\begin{equation}
		\label{eqn:sasakibyform}
		\eta\wedge X^\flat=\widetilde\nabla_X\Phi.
	\end{equation}
	We have
	\begin{gather*}
		\Phi(u,w)=\tilde g(u,\phi(w))=\frac12 g(u,(\ad w)^*(\xi))=-\frac12 g([u,w],\xi),\\
		\Phi(e_0,w)=\tilde g(e_0,\phi(w))=g(b,w).
	\end{gather*}
	Thus,~\eqref{eqn:sasakibyform} for $X=e_0$ implies
	\begin{multline*}
		0=(\widetilde\nabla_{e_0}\Phi)(u,w)=-\Phi(\widetilde\nabla_{e_0}u,w)-\Phi(u,\widetilde\nabla_{e_0}w)
		=-\Phi(D^a(u),w)-\Phi(u,D^a(w))\\
		=\frac12 g([D^a(u),w],\xi)+\frac12 g([u,D^a(w)],\xi)=-\frac12d\eta(D^a(u),w)
		-\frac12d\eta(u,D^a(w))\\
		=\frac12(D^a d\eta)(u,w).
	\end{multline*}
	Similarly,
	\begin{multline*}
		-\tau g(w,\xi)
		=(\widetilde\nabla_{e_0}\Phi)(e_0,w)=-\Phi(e_0,\widetilde\nabla_{e_0} w)\\
		=-\Phi(e_0,D^a(w))=-g(b,D^a(w))=g(D^a(b),w),
	\end{multline*}
	i.e. $D^a(b)=-\tau \xi$.
	
	Then,~\eqref{eqn:sasakibyform} for $X=x\in\g$ gives
	\begin{align*}
		&g(u,\xi)g(x,w)-g(x,u)g(\xi,w)=(\widetilde\nabla_{x}\Phi)(u,w)=-\Phi(\widetilde\nabla_xu,w)-\Phi(u,\widetilde\nabla_x w)\\
		&=\Phi (\ad(u)^s(x)+\frac12 (\ad x)^*(u)-\tau g(D^s(u),x)e_0,w)\\
		&\quad-\Phi (\ad(w)^s(x)+\frac12 (\ad x)^*(w)-\tau g(D^s(w),x)e_0,u)\\
		&=-\frac12g\biggl([\ad(u)^s(x)+\frac12 (\ad x)^*(u),w]-[\ad(w)^s(x)+\frac12 (\ad x)^*(w),u],\xi\biggr)\\
		&\quad-\tau g(b,w)g(D^s(x),u)+\tau g(b,u)g(D^s(x),w)\\
		&=-\frac14g\biggl(\bigl[[u,x]+(\ad u)^*x+(\ad x)^*u,w\bigr]
		-\bigl[[w,x]+(\ad w)^*x+(\ad x)^*w,u\bigr],\xi\biggr)\\
		&\quad+\tau(b^\flat\wedge D^s(x)^\flat)(u,w)\\
		&=-\frac14g\biggl(\bigl[(\ad u)^*x+(\ad x)^*u,w\bigr]
		-\bigl[(\ad w)^*x+(\ad x)^*w,u\bigr]+[[u,w],x],\xi\biggr)\\
		&\quad+\tau (b^\flat\wedge D^s(x)^\flat)(u,w)\\
		&=\frac14d\eta(\ad (u)^*x+(\ad x)^*u,w)
		-\frac14d\eta(\ad (w)^*x+(\ad x)^*w,u)\\
		&\quad-\frac14d\eta(x,[u,w])+\tau(b^\flat\wedge D^s(x)^\flat)(u,w)\\
		&=\frac14\alpha_x(u,w)-\frac14(\ad x)^*(d\eta)(u,w)
		+\frac14d(\Lie_x\eta)(u,w)
		+\tau (b^\flat \wedge D^s(x)^\flat) (u,w)
	\end{align*}
	so
	\[\eta\wedge x^\flat = \frac14\alpha_x-\frac14(\ad x)^*(d\eta)
	+\frac14d(\Lie_x\eta)+\tau(b^\flat \wedge D^s(x)^\flat).\]
	Finally,
	\begin{multline*}
		0=(\widetilde\nabla_{x}\Phi)(e_0,w)=
		-\Phi(\widetilde\nabla_xe_0,w)-\Phi(e_0,\widetilde\nabla_x w)
		=\Phi(D^s(x),w)-\Phi(e_0,\nabla_x w)\\
		=\frac12 g([w,D^s(x)],\xi)
		-g(b,\nabla_x w)\\
		=\frac12 g(D^s(x),(\ad w)^*(\xi))
		+g(b,\ad (w)^s(x)+ \frac12 (\ad x)^*(w))\\
		=-\frac12 d\eta(w,D^s(x))
		+\frac12g\bigl(b,\ad (w)(x)+(\ad w)^*(x)+ (\ad x)^*(w)\bigr).
	\end{multline*}
	Equivalently,
	\begin{multline*}
		0=-d\eta(w,D^s(x))+g(b,\ad (w)(x)+(\ad w)^*(x)+ (\ad x)^*(w))\\
		=-d\eta(w,D^s(x))+db^\flat(x,w)+dx^\flat(b,w)+g([x,b],w)\\
		=(D^s(x)\hook d\eta + x\hook db^\flat + b\hook dx^\flat + [x,b]^\flat)(w).
	\end{multline*}
	
	Conversely, define $(\phi,\xi,\eta,\tilde g)$ as in the statement, and assume that~\eqref{eqn:sasakixi}--\eqref{eqn:sasakib} hold. Since $\ad\xi$ is antisymmetric,
	\[\ad\xi=-(\ad \xi)^*,\qquad \xi\hook d\eta = -(\ad \xi)^*(\xi)^\flat = (\ad\xi)(\xi)^\flat=0.\]
	Evaluating~\eqref{eqn:sasakietax} on $u,\xi$, one obtains
	\begin{align*}
		&\quad g(u,\xi)g(x,\xi)-g(x,u)\\
		&=\frac14d\eta(\ad (u)^*x+(\ad x)^*u,\xi)
		-\frac14d\eta(\ad (\xi)^*x+(\ad x)^*\xi,u)\\
		&\quad -\frac14d\eta(x,[u,\xi])+\tau (b^\flat\wedge D^s(x)^\flat)(u,\xi)\\
		&=-\frac14d\eta(-[\xi,x],u)-\frac14d\eta(x,[u,\xi])\\
		&\quad-\frac14d\eta((\ad x)^*\xi,u)+\tau g(b,u)g(D^s(x),\xi)\\
		&=-\frac14\eta([\xi ,[u,\xi]])+\frac14(u\hook d\eta)((\ad x)^*\xi)+\tau g(b,u)g(x,D^s\xi)\\
		&=-\frac14g((\ad u)^*\xi,(\ad x)^*\xi)-\tau g(b,u)g(x,b),
	\end{align*}
	which is equivalent to~\eqref{eqn:acg}. Since~\eqref{eqn:sasakixi} is assumed to hold and $\phi$ is defined so as to satisfy~\eqref{eqn:acphi}, Lemma~\ref{lemma:acstandard} implies that $(\phi,\xi,\eta,\tilde g)$ is an almost contact metric structure. In order to prove that it is Sasaki, one only needs to verify that~\eqref{eqn:sasakibyform} holds, which follows from the computations above.
\end{proof}

\begin{remark}
	The $2$-form $\alpha_x$ of Proposition~\ref{prop:sasakistandard} corresponds to the $2$-form $\alpha^{\Phi}_x$ of Lemma~\ref{lemma:derivativeoftwoform} with $\Phi$ equal to $d\eta$.
\end{remark}

\begin{remark}
	Using Lemma~\ref{lemma:derivativeoftwoform}, we see that~\eqref{eqn:sasakietax} can be rewritten as
	\begin{equation}
		\label{eqn:sasakietaxnabla}
		\eta\wedge x^\flat = \frac12\nabla_x d\eta + \tau b^\flat\wedge D^s(x)^\flat.
	\end{equation}
	Using equation~\eqref{eqn:lc}, we can read condition~\eqref{eqn:sasakib} as:
	\[
	D^s(x)\hook d\eta=\nabla_x b.
	\]
\end{remark}

\begin{remark}
	It is well known that on a Sasaki Lie algebra $\tilde\g$ the center is contained in $\Span{\xi}$; indeed, any element of the center satisfies $v\hook d\eta=0$, so it is a multiple of $\xi$.
	
	If $\tilde\g$ has nontrivial center, then $\lie z(\tilde\g)=\Span{\xi}$ and the quotient $\check\g=\g/\Span{\xi}$ has an induced pseudo-K\"ahler structure $(\hat g, J,\omega)$ by Proposition~\ref{prop:kahlerquotient}.
\end{remark}

\begin{remark}
	The equations of Proposition~\ref{prop:sasakistandard} simplify if we assume that the center is nontrivial, because then $\ad\xi=0$. However, the center may be trivial on a Sasaki Lie algebra, see e.g. Example~\ref{example:4.3}. It is noteworthy that Example~\ref{example:4.3} is isometric to a standard Lie algebra with nontrivial center (see Example~\ref{ex:43nontrivialcenter}).
\end{remark}

\section{$\lie z$-Standard Sasaki structures}\label{sec:SasakiReduction}
In this section we study the particular case where the vector $b$ of Proposition~\ref{prop:sasakistandard} is central in $\lie g$. More precisely, we say that a Sasaki structure $(\tilde\phi,\tilde\xi,\tilde\eta,\tilde g)$ on a Lie algebra $\tilde\g$ is \emph{$\lie z$-standard} if there is a standard decomposition $\tilde\g=\g\rtimes_D\Span{e_0}$ with $b=-\phi(e_0)$ in the center of $\g$ and $\tilde g=g+\tau e^0\otimes e^0$, with $\tau=\pm1$.

We will start by giving a geometric interpretation of this condition; to that end, we will need to recall a well-known construction. Let $\tilde\g$ be a Lie algebra with a Sasaki structure $(\tilde\xi,\tilde\eta,\tilde g,\tilde\phi)$. Let $X$ be a nonzero vector in $\tilde\g$. The associated, left-invariant Sasaki structure on the connected, simply connected group $\tilde G$ with Lie algebra $\tilde\g$ is invariant under the left action of the group $\{\exp tX\}$. The fundamental vector field $X^*$ is defined by
\[X^*_{g}=\frac{d}{dt} ( \exp t X)g,\]
so identifying $T_g\tilde G$ with $\tilde\g$ by left-translation we get
\[L_{g^{-1}*}X^*_{g}=\frac{d}{dt} g^{-1}( \exp t X)g=\Ad(g^{-1})X.\]
The moment map $\mu\colon\tilde G\to\R$ is by definition
\[\mu (g)=\eta(\Ad(g^{-1})X).\]
Therefore,
\begin{multline*}
	d\mu_g(L_{g*}v)=\frac{d}{dt}|_{t=0} \mu (g\exp tv)\\
	=\frac{d}{dt}|_{t=0}\eta(\Ad(\exp -tv)\Ad(g^{-1})X)
	=-\eta([v,\Ad(g^{-1})X]).
\end{multline*}
Now if $\mu(g)=0$, $\Ad(g^{-1})X\in\ker\eta$. This implies that $\Ad(g^{-1})X\hook d\eta$ is nonzero, i.e. there is some $v$ such that $\eta([v,\Ad(g^{-1})X])\neq0$. Thus, $0$ is a regular value and $\mu^{-1}(0)$ is a hypersurface.

Since $X^*$ is nowhere zero, the action of $\{\exp tX\}$ is well defined on $\mu^{-1}(0)$. Therefore, the quotient
\[\tilde G//\{\exp tX\}=\mu^{-1}(0)/\{\exp tX\}\]
is well defined (locally), and it has an induced Sasaki structure.

$\lie z$-standard Sasaki structures can be characterized as follows:
\begin{lemma}
	\label{lemma:3littleconditions}
	Let $\tilde\g$ be a Lie algebra with a Sasaki structure $(\phi,\xi,\eta,\tilde g)$. The following are equivalent:
	\begin{enumerate}[label=(\textit{\roman*})]
		\item\label{cond:3little.1} there is a standard decomposition $\tilde\g=\g\rtimes_D\Span{e_0}$ with $\phi(e_0)$ in the center of $\g$;
		\item\label{cond:3little.2} $\tilde\g$ contains a vector $X$ with $\tilde g(X,X)\neq0$ such that its centralizer $\lie z(X)$ is a nilpotent ideal of codimension one;
		\item\label{cond:3little.3} the simply connected Lie group $\tilde G$ with Lie algebra $\tilde\g$ has a one-parameter subgroup $\{\exp tX\}$ such that
		\begin{itemize}
			\item $\tilde g(X,X)\neq0$;
			\item the zero set of the moment map is a normal nilpotent subgroup $G$; and
			\item $\{\exp{tX}\}$ commutes with $G$.
		\end{itemize}
	\end{enumerate}
\end{lemma}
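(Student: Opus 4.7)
My plan is to prove the cycle $(i) \Rightarrow (iii) \Rightarrow (ii) \Rightarrow (i)$. The common thread is that the vector $X$ in conditions (ii) and (iii) will correspond up to a nonzero scalar to $b = -\phi(e_0)$ in (i), and the codimension-one nilpotent ideal appearing in all three conditions (namely $\lie z(X)$ in (ii), the Lie algebra of $G$ in (iii), and the nilpotent factor $\g$ of the standard decomposition in (i)) is the same subalgebra.

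For $(i) \Rightarrow (iii)$, set $X = b$. By Lemma~\ref{lemma:acstandard}, $\tilde g(b,b)=\tau\neq0$. The connected subgroup $G$ with Lie algebra $\g$ is normal and nilpotent, and the condition $b\in\lie z(\g)$ means $\{\exp tb\}$ commutes with $G$. For $g\in G$, $\Ad(g^{-1})b=b$, so $\mu(g)=\eta(b)=\tilde g(\xi,D^a(\xi))=0$ by antisymmetry of $D^a$, and $G\subseteq\mu^{-1}(0)$. Using $D^a(b)=-\tau\xi$ from Proposition~\ref{prop:sasakistandard} and $D^s(\xi)=-b$ (which follows from $D(\xi)=0$), one finds $d\mu_e(e_0)=-\eta(D(b))=2\tau\neq0$, while $d\mu_e$ vanishes on $\g$; hence $\mu^{-1}(0)$ coincides locally with $G$. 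Conversely, for $(iii)\Rightarrow(ii)$, with $\g$ the Lie algebra of $G$, $[X,\g]=0$ gives $\g\subseteq\lie z(X)$; if $\lie z(X)=\tilde\g$ then $X\in\lie z(\tilde\g)\subseteq\Span{\xi}$, forcing $\mu(e)=\eta(X)\neq0$ and contradicting $e\in G=\mu^{-1}(0)$, so $\lie z(X)=\g$.

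For $(ii)\Rightarrow(i)$, set $\g:=\lie z(X)$. The Koszul formula applied to $\nabla_X\xi=-\phi X$, using skew-symmetry of $\ad\xi$, yields $(\ad X)^*\xi=2\phi X$, equivalently $X\hook d\eta=-2(\phi X)^\flat$. Since $\phi X\neq 0$ (otherwise $X\in\Span{\xi}$ would make $\lie z(X)=\ker\ad\xi$ of even codimension by skew-symmetry of $\ad\xi$), the kernel of $X\hook d\eta$ equals the codim-one subspace $(\phi X)^\perp$, and the inclusion $\g\subseteq\ker(X\hook d\eta)$ forces $\g=(\phi X)^\perp$. Hence $\g^\perp=\Span{\phi X}$, and $d\eta(X,\xi)=0$ gives $\xi\in\g$. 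After replacing $X$ by $X-\eta(X)\xi$---which still satisfies $\lie z(X-\eta(X)\xi)=\g$ provided $\xi\in\lie z(\g)$---we may assume $\eta(X)=0$; then $\phi X$ is non-null, $e_0:=\phi X/\sqrt{|\tilde g(X,X)|}$ (with sign chosen so $\tilde g(e_0,e_0)=\tau\in\{\pm1\}$) spans $\g^\perp$, and the resulting standard decomposition $\tilde\g=\g\rtimes_{\ad e_0}\Span{e_0}$ has $b:=-\phi(e_0)$ equal to a scalar multiple of $X\in\lie z(\g)$, as required.

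The main obstacle I foresee is the verification that $\xi\in\lie z(\g)$ (equivalently, $\ad\xi|_\g=0$): although $\ad\xi$ is a skew-symmetric nilpotent derivation preserving $\g$, in indefinite signature such operators need not vanish, so additional Sasaki-specific input is needed here.
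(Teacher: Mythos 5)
Your directions (i)$\Rightarrow$(iii) and (iii)$\Rightarrow$(ii) are correct and in places more detailed than the paper, which proves the cycle in the order (i)$\Rightarrow$(ii)$\Rightarrow$(iii)$\Rightarrow$(i) (your computation of $d\mu_e$ and the check $G\subseteq\mu^{-1}(0)$ supply steps the paper omits). The substantive difference is that the paper closes the cycle with (iii)$\Rightarrow$(i) rather than (ii)$\Rightarrow$(i), and this is not an innocent reordering: condition (iii) hands you for free both $\eta(X)=0$ (the identity lies in $\mu^{-1}(0)$, so $\mu(e)=\eta(X)=0$) and $[X,\g]=0$ (the commuting hypothesis). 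With $\eta(X)=0$ one gets $\tilde g(\phi X,\phi X)=\tilde g(X,X)\neq0$, so $\g=\ker(X\hook d\eta)=(\phi X)^\perp$ is automatically nondegenerate, $e_0=\phi X$ spans a nondegenerate orthogonal complement, and $\phi(e_0)=\phi^2X=-X$ is central in $\g$; no normalization of $X$ is ever needed.

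Starting instead from (ii) forces you to arrange $\eta(X)=0$ by hand, and this is where your argument is incomplete, in two ways, only the first of which you flag. (a) As you note, $\lie z(X-\eta(X)\xi)\supseteq\g$ requires $[\xi,\g]=0$. This is true a posteriori once (i) is known (Corollary~\ref{cor:sasakistandardcentral} derives it from $D(b)=-2\tau\xi+hb$ and the derivation property), but you give no a priori proof, and $\ad\xi$ does not vanish on a general Sasaki Lie algebra (e.g.\ Example~\ref{example:4.3}, whose center is trivial). A useful step toward closing this: for $v\in\lie z(\g)$ and $w\in\g$ one has $0=-\eta([v,w])=d\eta(v,w)=-2\tilde g(\phi v,w)$, so $\phi(\lie z(\g))\subseteq\g^\perp=\Span{\phi X}$ and hence $\lie z(\g)\subseteq\Span{X,\xi}$; the whole obstruction is thus whether $\xi$ itself centralizes $\g$. (b) Even granting (a), the replaced vector $X'=X-\eta(X)\xi$ satisfies $\tilde g(X',X')=\tilde g(X,X)-\eta(X)^2=\tilde g(\phi X,\phi X)$, and the hypothesis $\tilde g(X,X)\neq0$ does not force this to be nonzero. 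If it vanishes, $\phi X$ is null, $\phi X\in(\phi X)^\perp=\g$, and $\g$ is degenerate, so there is no orthogonal (standard) decomposition with nilpotent factor $\g$; your assertion ``then $\phi X$ is non-null'' is precisely the point at issue and is not justified. In fairness, the paper's own step (ii)$\Rightarrow$(iii) quietly assumes the same normalization (it asserts that $\mu^{-1}(0)$ is the connected subgroup with Lie algebra $\lie z(X)$, which forces $\mu\equiv\eta(X)$ to vanish on that subgroup), so a fully rigorous treatment by either route must first show that under (ii) one may choose $X$ with $\eta(X)=0$ and $\tilde g(X,X)\neq0$; your write-up does not do this, and that is the genuine gap.
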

\begin{proof}
	If~\ref{cond:3little.1} holds, observe that $e_0$ is not a multiple of $\xi$ by Proposition~\ref{prop:sasakistandard}; thus, $X=-\phi(e_0)$ has centralizer equal to $\g$. This implies ~\ref{cond:3little.2}.
	
	Now assume that~\ref{cond:3little.2} holds; then $\tilde\g$ is solvable, as it contains a codimension one nilpotent ideal. The zero level set of the moment map $\{g\mid \eta(\Ad(g^{-1})X)=0\}$ is the connected subgroup with Lie algebra $\lie z(X)$, giving~\ref{cond:3little.3}.
	
	Finally, suppose that~\ref{cond:3little.3} holds. Since $\mu^{-1}(0)$ is a normal nilpotent subgroup, its Lie algebra is the nilpotent ideal
	\[\g=\ker X\hook d\eta.\]
	In addition, $\mu^{-1}(0)$ contains the identity, so $\eta(X)=0$. This implies that $\g$ has codimension one. By construction, $e_0=\phi(X)$ is orthogonal to $\g$. Since $X$ is not lightlike, the restriction of the metric to $\g$ is definite; hence we have a standard decomposition $\tilde\g=\g\rtimes\Span{e_0}$. By construction, $\phi(e_0)=-X$, so it is central in $\g$, giving~\ref{cond:3little.1}.
\end{proof}
Given a $\lie z$-standard Sasaki structure, Lemma~\ref{lemma:3littleconditions} implies that $\{\exp tX\}$ is central in $G$, so the right action of $\{\exp tX\}$ preserves the Sasaki structure and the
quotient $G/\exp\{tX\}$ is a Lie group with Lie algebra $\lie{z}(X)/\Span{X}$, which is Sasaki by construction. Conversely, we can express $\lie z(X)$ as a central extension of $X$, and then express $\g$ as a standard extension of $\lie z(X)$.

\begin{example}
	In Example~\ref{ex:43nontrivialcenter}, $\{\exp te_2\}$ satisfies the conditions of Lemma~\ref{lemma:3littleconditions}; the three-dimensional quotient in this case is the Heisenberg algebra, with its Sasaki structure.
\end{example}

In the language of Proposition~\ref{prop:sasakistandard}, we can express this as follows:
\begin{corollary}
	\label{cor:sasakistandardcentral}
	Let $\g$ be a nilpotent Lie algebra with a pseudo-Riemannian metric $g$, $D$ a derivation and $\tau=\pm1$. Assume $\tilde\g=\g\rtimes_D\Span{e_0}$ has a $\lie z$-standard Sasaki structure $(\phi,\xi,\eta,\tilde g)$. Then
	the following hold for $x\in\g$:
	\begin{gather*}
		D(\xi)=0, \qquad D(b)=-2\tau \xi+hb, \quad h\in\R, \quad b,\xi\in\lie z(\g)\\
		D^a(d\eta)=0, \qquad D(d\eta)=2db^\flat,\\
		\eta\wedge x^\flat =\frac12\nabla_x d\eta+\tau b^\flat \wedge D^s(x)^\flat,\\
		d\eta(D^s(x),y)=d\eta(x,D^s(y)).
	\end{gather*}
	Furthermore, $\phi$ is given by
	\[\phi(w)=\frac12 (\ad w)^*(\xi)+\tau g(b,w)e_0, \qquad \phi(e_0)=-b, \quad w\in\g.\]
	In addition, $\g/\Span{b}$ has a Sasaki structure $(\check\phi,\check\xi,\check\eta,\check g)$ induced by the identification $\Span{e_0,b}^\perp \cong \g/\Span{b}$; at the level of the corresponding Lie groups, this amounts to taking the Sasaki reduction by the left action of the one-parameter subgroup $\{\exp tb\}$.
\end{corollary}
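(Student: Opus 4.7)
The strategy is to specialize the equations of Proposition~\ref{prop:sasakistandard} to the $\lie z$-standard setting, where $\ad b = 0$ produces several simplifications, and then to interpret the Sasaki reduction using Lemma~\ref{lemma:3littleconditions}.

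Several equations transfer directly or after routine simplification. $D\xi = 0$ and $D^a(d\eta) = 0$ are immediate from Proposition~\ref{prop:sasakistandard}, and $(\ad b)^*\xi = 0$ becomes automatic. In equation \eqref{eqn:sasakib}, the terms $[x,b]^\flat$ and $b\hook dx^\flat$ vanish by centrality of $b$, so it reduces to $d\eta(D^s x, y) = g(b, [x,y])$; swapping $x$ and $y$ gives $d\eta(D^s x, y) = d\eta(x, D^s y)$. Equation \eqref{eqn:sasakietax} becomes the claimed $\nabla$-form via Lemma~\ref{lemma:derivativeoftwoform}, as noted in the remark following the proposition. For $D(d\eta) = 2 db^\flat$, I would deduce $D^s\xi = -b$ from $D\xi = 0$ together with $D^a\xi = b$, compute $D^*\xi = D^s\xi - D^a\xi = -2b$, and infer $(D\eta)(y) = -g(D^*\xi,y) = 2b^\flat(y)$; then $D(d\eta) = d(D\eta) = 2 db^\flat$, since $D$ commutes with the Chevalley-Eilenberg differential.

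The genuinely new content is the form of $D(b)$ and the centrality of $\xi$. Setting $x = b$ in $d\eta(D^s x, y) = d\eta(x, D^s y)$ and using $b\hook d\eta = 0$ forces $D^s(b) \in \ker d\eta \cap \g$. The pivotal step is the identity $\ker d\eta \cap \g = \Span{\xi, b}$: for $u \in \g$, one has $u \in \ker d\eta$ iff $(\ad u)^*\xi = 0$, and in that case $\phi(u) = \tau g(b, u) e_0$, so direct computation gives $\phi^2(u) = -\tau g(b, u) b$; comparing with $\phi^2(u) = -u + \eta(u)\xi$ yields $u = \eta(u)\xi + \tau g(b,u) b$. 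Writing $D^s(b) = \alpha \xi + hb$, the coefficient $\alpha = -\tau$ is pinned down by $g(D^s b, \xi) = g(b, D^s\xi) = -g(b,b) = -\tau$, and adding $D^a(b) = -\tau\xi$ gives $D(b) = -2\tau\xi + hb$. Centrality of $\xi$ then follows: $b \in \lie z(\g)$ and $D$ is a derivation, hence $D(b) \in \lie z(\g)$, and subtracting the central element $hb$ leaves $\xi \in \lie z(\g)$.

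Finally, for the Sasaki reduction I apply Lemma~\ref{lemma:3littleconditions} with $X = -\phi(e_0) = b$: the moment map's zero set is $G$, and since $b$ is central in $\g$ the subgroup $\{\exp tb\}$ is central in $G$, so the reduction $G/\{\exp tb\}$ is a Lie group with Lie algebra $\g/\Span{b}$. This quotient is identified with $\Span{e_0,b}^\perp = b^\perp \cap \g$ (a valid direct complement since $g(b,b)=\tau\neq 0$), and $\phi$ preserves this subspace because $g(\phi(u),b) = \tfrac12 g(\xi,[u,b]) = 0$; the restricted data give the reduced Sasaki structure by the standard theory of contact reduction. The hardest part will be establishing the structural identity $\ker d\eta \cap \g = \Span{\xi, b}$, which confines $D^s(b)$ to a two-dimensional subspace and ultimately forces both the stated form of $D(b)$ and the centrality of $\xi$.
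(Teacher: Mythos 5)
Your proposal is correct and follows essentially the same route as the paper: specialize Proposition~\ref{prop:sasakistandard} using centrality of $b$, confine $D^s(b)$ to $\Span{b,\xi}$ via the degeneracy locus of $d\eta$, pin down its $\xi$-component by symmetry of $D^s$ together with $D^s\xi=-b$, deduce centrality of $\xi$ from $D$ being a derivation, and invoke the reduction discussion following Lemma~\ref{lemma:3littleconditions}. Your explicit verification that $\{u\in\g : u\hook d\eta=0\}=\Span{\xi,b}$ via $\phi^2=-\id+\eta\otimes\xi$ is a nice touch that fills in a detail the paper dispatches by simply asserting nondegeneracy of $d\eta$ on $\Span{b,\xi}^\perp$.
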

\begin{proof}
	We specialize Proposition~\ref{prop:sasakistandard} with $b=-\phi(e_0)$ central. Then $(\ad b)^*$ and $b\hook dx^\flat$ are zero. In particular, from~\eqref{eqn:sasakib}, we get
	\begin{equation}
		\label{eqn:Dsxhooketa}
		D^s(x)\hook d\eta + x\hook db^\flat=0.
	\end{equation}
	For $x=b$, this implies $D^s(b)\hook d\eta=0$. Since $d\eta$ is nondegenerate on $\Span{b,\xi}^\perp\!$, this implies that $D^s(b)\in\Span{b,\xi}$. Furthermore, we have
	\[g(D^s(b),\xi)=g(b,D^s(\xi))=g(b,-b)=-\tau,\]
	so $D^s(b)=-\tau\xi + hb$ for some real constant $h$. Therefore,
	\[D(b)=-2\tau\xi+hb.\]
	Since $D$ is a derivation, we have
	\[0=D[b,x]=[D(b),x]+[b,D(x)]=-2\tau[\xi,x].\]
	Therefore $\xi$ is in the center of $\g$.
	
	By~\eqref{eqn:sasakiD}, $D^a(d\eta)=0$, so we observe that
	\begin{multline}
		D^sd\eta(x,y)=Dd\eta(x,y)=-d\eta(Dx,y)-d\eta(x,Dy)\\
		=\eta([Dx,y]+[x,Dy])=\eta(D[x,y])=-2g(b,[x,y])=2db^\flat(x,y).
	\end{multline}
	Therefore, $D(d\eta)=2db^\flat$ and~\eqref{eqn:Dsxhooketa} becomes equivalent to
	\[0=d\eta(D^s(x),y)+\frac12(D^sd\eta)(x,y)=\frac12\bigl(d\eta(D^s(x),y)-d\eta(x,D^s(y))\bigr).\]

	For the last part, observe that $\lie g$ is the centralizer of $b$ in $\tilde\g$, and apply the observation before the statement. The fact that $(\check\phi,\check\xi,\check\eta,\check g)$ is Sasaki can be seen from $\eta\wedge x^\flat=\frac12\check\nabla_xd\eta$.
\end{proof}

We can describe the situation of Corollary~\ref{cor:sasakistandardcentral} in terms of the K\"ahler quotient as follows:
\begin{corollary}
	\label{cor:reductiontokahler}
	Let $\g$ be a nilpotent Lie algebra with a pseudo-Riemannian metric $g$, $D$ a derivation and $\tau=\pm1$. Assume $\tilde\g=\g\rtimes_D\Span{e_0}$ has a $\lie z$-standard Sasaki structure $(\phi,\xi,\eta,\tilde g)$. Then $\xi$ is central in $\g$ and there is $h\in\R$ such that
	\begin{enumerate}
		\item $g(\xi,\xi)=1$, $g(b,b)=\tau$, $g(b,\xi)=0$;
		\item the quotient $\check \g=\g/\Span{b,\xi}$ has a pseudo-K\"ahler structure $(\check g,J,\omega)$ with $(\g,g)\to(\check \g,\check g)$ a Riemannian submersion, $\omega=\frac12d\eta$ and $\check D(\omega)=db^\flat$;
		\item relative to the splitting $\Span{b,\xi}^\perp\oplus\Span{b}\oplus\Span{\xi}$, $D$ takes the form
		\[
		D=\begin{pmatrix} \check D & 0 & 0 \\ 0 & h & 0 \\ 0 & -2\tau & 0\end{pmatrix};\]
		\item $[J,\check D]=0$;
		\item $\check D$ is a derivation and $[\check D^s,\check D^a]=h\check D^s-2(\check D^s)^2$.
	\end{enumerate}
\end{corollary}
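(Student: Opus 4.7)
The plan is to unwind the conditions of Corollary~\ref{cor:sasakistandardcentral} on the quotient $\check\g=\g/\Span{b,\xi}\cong\Span{b,\xi}^\perp$; note that $\xi$ being central in $\g$ is already established there. Item (1) is immediate: $g(\xi,\xi)=1$ holds by our convention $\eps=1$; $g(b,\xi)=g(D^a(\xi),\xi)=0$ by antisymmetry of $D^a$; and $g(b,b)=\tau$ was verified in the proof of Lemma~\ref{lemma:acstandard}. For (3), the key step is to show that $D$ preserves $\Span{b,\xi}^\perp$. From $D(\xi)=0$ one has $D^s(\xi)=-D^a(\xi)=-b$, and combining $D^a(b)=-\tau\xi$ with $D(b)=-2\tau\xi+hb$ yields $D^s(b)=-\tau\xi+hb$. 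One then computes $g(D(x),\xi)=-2g(x,b)$ and $g(D(x),b)=hg(x,b)$, both vanishing on $\Span{b,\xi}^\perp$; the matrix form of $D$ reads off directly, and $\check D$ is identified with the restriction of $D$ to this subspace.

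For (2), Proposition~\ref{prop:kahlerquotient} applied to the Sasaki Lie algebra $\g/\Span{b}$ supplied by Corollary~\ref{cor:sasakistandardcentral} endows the Reeb quotient $\g/\Span{b,\xi}=\check\g$ with a pseudo-K\"ahler structure. The identification $\check\g\cong\Span{b,\xi}^\perp$ with the restricted metric makes $\g\to\check\g$ a Riemannian submersion; $J$ is the restriction of $\phi$ to the $\phi$-stable subspace $\Span{b,\xi}^\perp$; and $\omega=\Phi|_{\Span{b,\xi}^\perp}=\frac12 d\eta|_{\check\g}$. The identity $\check D(\omega)=db^\flat$ follows by restricting $D(d\eta)=2db^\flat$ from Corollary~\ref{cor:sasakistandardcentral}. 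For (4), the equation $d\eta(D^s(x),y)=d\eta(x,D^s(y))$ becomes $\omega(\check D^s x,y)=\omega(x,\check D^s y)$ on $\check\g$, which combined with the symmetry of $\check D^s$ and the skew-symmetry of $J$ yields $[J,\check D^s]=0$; likewise $D^a(d\eta)=0$ gives $[J,\check D^a]=0$, hence $[J,\check D]=0$. The derivation property of $\check D$ in (5) is clear, as $D$ is a derivation of $\g$ preserving the central $D$-invariant ideal $\Span{b,\xi}$, so it descends to the quotient.

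The main obstacle is the quadratic equation. Since $D^*b=D^s(b)-D^a(b)=hb$, we have $Db^\flat=-hb^\flat$, and since $D$ commutes with the Chevalley--Eilenberg differential, $D(db^\flat)=-h\,db^\flat$; restricting yields $\check D(db^\flat|_{\check\g})=-h\,db^\flat|_{\check\g}$, which together with $\check D(\omega)=db^\flat|_{\check\g}$ gives $\check D^2\omega=-h\check D\omega$. Expanding $\check D\omega=db^\flat|_{\check\g}$ using $[J,\check D^s]=[J,\check D^a]=0$ together with the symmetry of $\check D^s$ and antisymmetry of $\check D^a$ gives
\[db^\flat(x,y)=-\omega(\check Dx,y)-\omega(x,\check Dy)=-2\omega(x,\check D^s y)=-2\check g(x,J\check D^s y),\]
so the antisymmetric endomorphism $K$ representing $db^\flat$ is $K=-2J\check D^s$. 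Applying $\check D$ once more, the relation $\check D(db^\flat)=-h\,db^\flat$ translates into $\check D^*K+K\check D=hK$; substituting $K=-2J\check D^s$ and using $J\check D^*=\check D^*J$ simplifies this to $2(\check D^s)^2+[\check D^s,\check D^a]=h\check D^s$, which is exactly $[\check D^s,\check D^a]=h\check D^s-2(\check D^s)^2$.
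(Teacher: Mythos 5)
Your proof is correct and follows essentially the same route as the paper's: identify $\check\g$ with $\Span{b,\xi}^\perp$, extract the block form of $D$ from $D^s(\xi)=-b$ and $D^s(b)=-\tau\xi+hb$, deduce $[J,\check D^s]=[J,\check D^a]=0$ from $d\eta(D^s\cdot,\cdot)=d\eta(\cdot,D^s\cdot)$ and $D^a(d\eta)=0$, and obtain the quadratic identity by writing $db^\flat$ as the skew form $-2J\check D^s$ and imposing the derivation compatibility. Your two streamlinings — invoking the Reeb-quotient result of Proposition~\ref{prop:kahlerquotient} for item (2) instead of verifying nondegeneracy and $J^2=-\id$ by hand, and packaging the compatibility condition as $D^*b=hb\Rightarrow D(db^\flat)=-h\,db^\flat$ via commutation of the derivation with the Chevalley--Eilenberg differential rather than expanding $D[x,y]=[Dx,y]+[x,Dy]$ — yield exactly the same identities the paper uses, so no substantive gap remains.
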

\begin{proof}
	Define $b=\phi(e_0)$, hence $g(\xi,\xi)=1$ by definition of Sasaki and
	\[g(b,\xi)=\tilde g(b,\xi)=-\tilde g(e_0,\phi(\xi))=0,\qquad g(b,b)=\tilde{g}(e_0,e_0)=\tau\]
	give the first condition.
	
	Let $\check\g=\g/\Span{b,\xi}$. Then arguing as in Proposition~\ref{prop:kahlerquotient} we see that $\check\nabla d\eta$ is the projection of $\nabla d\eta$; projecting the equation~\eqref{eqn:sasakietaxnabla},
	we see that $d\eta$ is $\check\nabla$-parallel. Furthermore, for $x$ orthogonal to $b,\xi$, we get by taking the interior product of~\eqref{eqn:sasakietaxnabla} with $\xi$ that
	\[x^\flat = \frac12\xi\hook \nabla_x d\eta - g(D^s(x),\xi)\tau b^\flat= \frac12\xi\hook \nabla_x d\eta;\]
	using Lemma~\ref{lemma:derivativeoftwoform}, we get
	\begin{equation}
		\label{eqn:xflatequalsxihookdeta}
		x^\flat = \frac14\xi\hook (\alpha_x-(\ad x)^*d\eta +\Lie_x d\eta)
		=\frac14 (\ad x)^*\xi\hook d\eta.
	\end{equation}
	This implies that $d\eta$ is nondegenerate. Now set
	\[J(x)=-\frac12(x\hook d\eta)^\sharp.\]
	Then in $\Span{b,\xi}^\perp$ equation~\eqref{eqn:xflatequalsxihookdeta} reads
	\[x^\flat=-\frac14 (x\hook d\eta)^\sharp \hook d\eta = \frac12J(x)\hook d\eta=-\big(J\circ J(x)\big)^\flat=-\big(J^2(x)\big)^\flat;\]
	therefore, $J$ is an almost complex structure, and $(\check g,J,d\eta)$ is a pseudo-K\"ahler structure. In particular, we can write
	\[d\eta(x,y)=2g(x,Jy).\]
	Now from Corollary~\ref{cor:sasakistandardcentral} write
	\[d\eta(D^s(x),y)=d\eta(x,D^s(y))\]
	as
	\[g(JD^s(x),y)=g(Jx,D^s(y))=-g(x,JD^s(y)),\]
	i.e. $JD^s=-(JD^s)^*=D^sJ$. In addition, $D^ad\eta=0$ can be rewritten as
	\begin{multline*}
		0=D^ad\eta(x,y)=d\eta(D^ax,y)+d\eta(x,D^ay)\\
		=2g(D^ax,JY)+2g(x,JD^ay)=2g(x,[J,D^a]y).
	\end{multline*}
	This shows that $J$ and $D$ commute.
	
	The Lie bracket on $\check \g$ and the Lie bracket on $\g$ are related by
	\[[x,y]=[x,y]_{\check \g}-\tau db^\flat(x,y)b-d\eta(x,y)\xi;\]
	$b,\xi$ are in the center for $\g$. Relative to the splitting $\Span{b,\xi}^\perp\oplus\Span{b}\oplus\Span{\xi}$, $D$ takes the form
	\begin{equation}
		\label{eqn:DforReduction}
		D=\begin{pmatrix}\check D & 0 & 0 \\ 0 & h & 0 \\ 0 & -2\tau & 0\end{pmatrix}.
	\end{equation}
	A linear map $D$ of the form~\eqref{eqn:DforReduction} automatically satisfies $D[x,y]=[Dx,y]+[x,Dy]$ when $x$ lies in $\Span{b,\xi}$; therefore, $D$ is a derivation if and only if for $x,y$ in $\Span{b,\xi}^\perp$ one has
	\begin{align*}
		0=D[x,y]-[Dx,y]-[x,Dy]=&\check D[x,y]_{\check\g}-\tau db^\flat(x,y)(hb-2\tau\xi)\\
		&-[\check Dx,y]_{\check\g}+\tau db^\flat(\check Dx,y)b+d\eta(\check Dx,y)\xi\\
		&-[x,\check Dy]_{\check\g}+\tau db^\flat(x,\check Dy)b+d\eta(x,\check Dy)\xi.
	\end{align*}
	Thus, $D$ is a derivation if and only if $\check D$ is a derivation of $\check \g$ and
	\begin{gather*}
		h db^\flat(x,y) = db^\flat(\check Dx,y)+ db^\flat(x,\check Dy),\\
		-2 db^\flat(x,y)=d\eta(\check Dx,y)+d\eta(x,\check Dy),
	\end{gather*}
	where the latter is again $2 db^\flat=\check Dd\eta$.
	
	Then using $[J,D]=0$,
	\begin{multline*}
		db^\flat(x,y)=\frac12\check Dd\eta (x,y)=-\frac12d\eta(\check Dx,y)-\frac12d\eta(x,\check Dy)\\
		=-g(\check Dx,Jy)-g(x,J\check Dy)=-g(x, (\check D^*J+J\check D)y)=-2g(x,\check D^sJ y).
	\end{multline*}
	Thus
	\begin{multline*}
		2hg(x,\check D^sJy)=-hdb^\flat(x,y)=-db^\flat(\check Dx,y)- db^\flat(x,\check Dy)\\
		=2g(\check Dx,\check D^sJ y)+2g(x,\check D^sJ \check D y)
		=2g(x,(\check D^s-\check D^a)\check D^sJ y)+2g(x,\check D^s \check DJ y).
	\end{multline*}
	Therefore,
	\[h\check D^sJ=(\check D^s-\check D^a)\check D^sJ +\check D^s \check DJ\\
	=2(\check D^s)^2J+[\check D^s,\check D^a]J,
	\]
	i.e.
	\[h\check D^s-2(\check D^s)^2=[\check D^s,\check D^a].\qedhere\]
\end{proof}

In the situation of Corollary~\ref{cor:reductiontokahler}, we will say that the
pseudo-K\"ahler Lie algebra $\check\g$ is the \emph{K\"ahler reduction} of the
$\lie z$-standard Sasaki structure of $\tilde\g$. Notice that $\check\g$ is indeed a K\"ahler reduction in the sense of symplectic geometry, arising from the action of $\{\exp tb\}$ on the pseudo-K\"ahler nilmanifold $\tilde\g/\Span{\xi}$.

\begin{example}
	In Example~\ref{ex:43nontrivialcenter}, we have
	\begin{gather*}
		\check \g=\Span{e_3,e_4}, \qquad \check D=I, \qquad b=-e_2, \qquad h=2, \qquad \tau=-1,\\
		\omega=e^{34},\qquad db^\flat=de^2=-2e^{34}, \qquad d\eta=2e^{34}.
	\end{gather*}
\end{example}

Corollary~\ref{cor:sasakistandardcentral} has a K\"ahler analogue, which can be viewed as a consequence of Corollary~\ref{cor:reductiontokahler}, using the fact that any pseudo-K\"ahler Lie algebra yields a Sasaki Lie algebra by taking a central extension. Notice that this construction only works one way in general, i.e. it is not generally true that a Sasaki Lie algebra is a central extension of a pseudo-K\"ahler Lie algebra. This only occurs when $\xi$ is central, which happens to be true in the situation of Corollary~\ref{cor:reductiontokahler}.
\begin{proposition}
	\label{prop:kahlerreduction}
	Let $\g$ be a nilpotent Lie algebra with a pseudo-Riemannian metric $g$, let $D$ be a derivation and $\tau=\pm1$. Suppose that $\tilde\g=\g\rtimes_D\Span{e_0}$ has a pseudo-K\"ahler structure $(\tilde J,\tilde g,\tilde \omega)$ such that $\tilde g=g+\tau e^0\otimes e^0$, with $b=-\tilde Je_0$ in the center of $\g$. Then
	\begin{enumerate}
		\item the quotient $\check \g=\g/\Span{b}$ has a pseudo-K\"ahler structure $(\check g,\check J,\check \omega)$ with $\pi\colon(\g,g)\to(\check \g,\check g)$ a Riemannian submersion, $\pi^*\check\omega=\tilde\omega|_{\g}$ and $D(\omega)=db^\flat$;
		\item relative to the splitting $\Span{b}^\perp\oplus\Span{b}$, $D$ takes the form
		\[
		D=\begin{pmatrix} \check D & 0 \\ 0 & h \end{pmatrix};\]
		\item $[\check J,\check D]=0$;
		\item $\check D$ is a derivation and $[\check D^s,\check D^a]=h\check D^s-2(\check D^s)^2$.
	\end{enumerate}
\end{proposition}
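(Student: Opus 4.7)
The plan is to proceed by direct computation, paralleling (and simplifying) the proof of Corollary~\ref{cor:reductiontokahler} since the Reeb direction $\xi$ is absent here. One could instead construct a central extension of $\g$ by a line that realises $\tilde\g$ as the Reeb quotient of a $\lie z$-standard Sasaki Lie algebra and then invoke Corollary~\ref{cor:reductiontokahler}, as suggested in the paragraph preceding the statement; but that route reduces to verifying the Sasaki condition of the extension, which is of comparable length to a direct proof. First I collect basic identities: $g(b,b)=\tilde g(e_0,e_0)=\tau$ (from $\tilde J$ being an isometry), $\tilde J b=-\tilde J^2 e_0=e_0$, and $\tilde\omega(b,y)=-\tilde g(\tilde J b,y)=-\tilde g(e_0,y)=0$ for every $y\in\g$. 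Thus $\Span{b}$ lies in the kernel of $\omega:=\tilde\omega|_\g$, and $\omega$ descends to a closed $2$-form $\check\omega$ on $\check\g=\g/\Span{b}$. Since $\tilde J$ preserves $\Span{b,e_0}$ it also preserves $\g_0:=\Span{b,e_0}^\perp=\Span{b}^\perp\cap\g$, giving an almost complex structure $\check J$ on $\g_0\cong\check\g$ whose integrability is inherited from $N_{\tilde J}=0$. This yields the claimed pseudo-K\"ahler structure on $\check\g$ with $\pi^*\check\omega=\tilde\omega|_\g$, the Riemannian submersion property being immediate from $\check g=g|_{\g_0}$.

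The key identity $D(\omega)=db^\flat$ on $\g$ follows from $d\tilde\omega=0$ applied to a triple $(x,y,e_0)$, using $\tilde\omega([x,y],e_0)=-g(b,[x,y])=db^\flat(x,y)$ and $[x,e_0]_{\tilde\g}=-Dx$. Setting $x=b$, the centrality of $b$ together with $\omega(b,\cdot)=0$ gives $\omega(Db,y)=0$ for all $y\in\g$, so $Db\in\ker\omega|_\g=\Span{b}$; write $Db=hb$. The parallelism $\tilde\nabla\tilde J=0$ applied at $\tilde\nabla_{e_0}(\tilde J e_0)=\tilde J(\tilde\nabla_{e_0}e_0)=0$, combined with Lemma~\ref{lemma:lcpseudoiwasawa}, gives $D^a(b)=0$; hence $D^*b=Db=hb$ and $D$ preserves $\g_0$, which is the asserted block form. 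For $[\check J,\check D]=0$, I specialise the Nijenhuis identity to $N_{\tilde J}(b,x)=0$ with $x\in\g_0$: using $\tilde J b=e_0$, $[x,b]=0$, $\tilde J x=\check J x\in\g_0$, and the block form just established, it collapses to $\check J\check D x=\check D\check J x$. Taking transposes then yields $[\check J,\check D^*]=0$, so $[\check J,\check D^s]=[\check J,\check D^a]=0$ follow.

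Finally, since $b$ is central, for $x,y\in\g_0$ the bracket of $\g$ decomposes as $[x,y]_\g=[x,y]_{\check\g}-\tau db^\flat(x,y)b$. Imposing $D[x,y]=[Dx,y]+[x,Dy]$ and matching the $\g_0$- and $\Span{b}$-components yields respectively that $\check D$ is a derivation of $\check\g$ and the relation $h\,db^\flat(x,y)=db^\flat(\check D x,y)+db^\flat(x,\check D y)$. Substituting $db^\flat(x,y)=2g(\check D^s\check J x,y)$, which follows from $D(\omega)=db^\flat$ combined with $[\check J,\check D^s]=0$, and expanding $\check D=\check D^s+\check D^a$ using the commutations with $\check J$, one recovers $[\check D^s,\check D^a]=h\check D^s-2(\check D^s)^2$ by the same algebraic manipulation as in the proof of Corollary~\ref{cor:reductiontokahler}. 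The main step that differs from the Sasaki argument is the commutation $[\check J,\check D^s]=0$: in Corollary~\ref{cor:reductiontokahler} this was read off from the Sasaki-specific identity $d\eta(D^s x,y)=d\eta(x,D^s y)$, while here it must be extracted from integrability of $\tilde J$ combined with the block form of $D$.
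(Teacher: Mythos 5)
Your proof is correct, but it takes a genuinely different route from the paper's. The paper proves Proposition~\ref{prop:kahlerreduction} by exactly the construction you mention and set aside: it forms the central extension $\tilde{\lie h}$ of $\tilde\g$ by the cocycle $2\tilde\omega$, observes (citing Hatakeyama) that $\tilde{\lie h}$ carries an induced Sasaki structure in which $b$ is still central, and then reads off all four items from Corollary~\ref{cor:reductiontokahler} applied to $\tilde{\lie h}=\lie h\rtimes_{D'}\Span{e_0}$. That argument is short because the heavy lifting was already done in the Sasaki setting, though it leaves implicit the verification that the extension meets the hypotheses of Corollary~\ref{cor:reductiontokahler}. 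Your direct computation stays entirely inside K\"ahler geometry and makes the provenance of each structural fact explicit: $D(\omega)=db^\flat$ and $Db=hb$ from $d\tilde\omega=0$; $D^a(b)=0$, hence the block form, from $\tilde\nabla\tilde J=0$ combined with Lemma~\ref{lemma:lcpseudoiwasawa}; $[\check J,\check D]=0$ from $N_{\tilde J}(b,x)=0$; and the quadratic relation from the derivation property of $D$ by the same algebra as in Corollary~\ref{cor:reductiontokahler}. I checked the individual steps and they are sound; the one point worth stating explicitly is $\ker(\tilde\omega|_\g)=\Span{b}$, which you use to conclude $Db=hb$ and which holds because $\tilde\omega$ is nondegenerate and $\g$ is a hyperplane of the even-dimensional $\tilde\g$. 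Your closing remark correctly identifies the real point of divergence: $[\check J,\check D^s]=0$ must here be extracted from integrability of $\tilde J$ rather than from the Sasaki identity $d\eta(D^sx,y)=d\eta(x,D^sy)$. The trade-off is that the paper's proof exhibits the K\"ahler statement as a shadow of the Sasaki one, while yours is self-contained and avoids the detour through contact geometry.
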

\begin{proof}
	Write $\check\g=\Span{b}^\perp$ in $\g$, and let $\omega$ be the restriction of $\tilde\omega$ to $\check\g$. Then $\tilde\omega=\omega - \tau b\wedge e^0$.
	
	Let $\lie h=\g\oplus{\Span{\xi}}$ be the central extension of $\g$ by the cocycle $2\omega$, $\check{\lie h}$ the quotient $\lie h/\Span{b}$, and $\tilde{\lie h}$ the semidirect product $\lie h\rtimes_{D'}\Span{e_0}$, where $D'$ is defined by
	\[D'v=Dv, \quad v\in\check \g,\qquad D'\xi=0, \qquad D'b=Db-2\tau \xi.\]
	We can summarize the situation as follows
	\[\check{\lie h}=\check{\lie g}\oplus\Span{\xi}, \qquad \lie h=\check\g\oplus\Span{b,\xi}, \qquad \tilde{\lie h}=\check\g\oplus \Span{b,\xi,e_0}.\]
	We can view equivalently $\tilde{\lie h}$ as the central extension of $\tilde\g$ by $2\tilde\omega$. In particular, $\tilde{\lie h}$ has a Sasaki metric $(\tilde \phi, \xi, \tilde h,\tilde\eta)$ induced by the pseudo-K\"ahler metric of $\tilde\g$ (see~\cite{Hatakeyama:SomeNotes}). Explicitly, $\tilde\eta$ is the $1$-form on $\tilde{\lie h}$ that vanishes on $\tilde\g$, with $\tilde\eta(\xi)=1$, so that $d\eta=2\tilde\omega$, we have
	\[\tilde h =\tilde g+\tilde\eta\otimes \tilde\eta,\qquad \tilde \phi=\tilde J.\]
	Since $b$ is central in $\lie h$, we can apply Corollary~\ref{cor:reductiontokahler}. Then $(\check g, \check J,\check\omega)$ is pseudo-K\"ahler, and $\check D\omega=db^\flat$,
	\[
	D'=\begin{pmatrix} \check D & 0 & 0 \\ 0 & h & 0 \\ 0 & -2\tau & 0\end{pmatrix},\]
	proving items 1 and 2. Items 3 and 4 follow directly from Corollary~\ref{cor:reductiontokahler}.
\end{proof}

\section{Construction of $\lie z$-standard Sasaki structures}
In this section we invert the reduction process of Corollary~\ref{cor:reductiontokahler} and describe a constructive way of obtaining $\lie z$-standard Sasaki structures. We also classify $\lie z$-standard Sasaki structures of dimension $\leq 7$ whose K\"ahler reduction is abelian.

\begin{proposition}
	\label{prop:constructive}
	Let $(\check \g,J,\omega)$ be a pseudo-K\"ahler nilpotent Lie algebra. Let $\check D$ be a derivation of $\check \g$, $\tau=\pm1$, and $\g=\check\g\oplus\Span{b,\xi}$ a central extension of $\g$ with a metric of the form:
	\begin{gather*}
		g(x,y)=\check g(x,y), \qquad g(x,b)=0=g(x,\xi), \\
		g(\xi,\xi)=1, \qquad g(b,b)=\tau,\qquad g(b,\xi)=0,
	\end{gather*}
	where $x,y\in\check\g$.
	Assume furthermore
	\begin{itemize}
		\item $d\xi^\flat=2\omega$, where the right-hand-side is implicitly pulled back to $\g$;
		\item $db^\flat=\check D\omega$, where the right-hand-side is implicitly pulled back to $\g$;
		\item $[J,\check D]=0$;
		\item $[\check D^s,\check D^a]=h\check D^s-2(\check D^s)^2$ for some constant $h$.
	\end{itemize}
	Let $\tilde\g=\g\rtimes \Span{e_0}$, where
	\[[e_0,x]=\check Dx, \qquad [e_0,b]=hb-2\tau \xi, \qquad [e_0,\xi]=0;\]
	then $\tilde\g$ has a $\lie z$-standard Sasaki structure $(\phi,\eta,\xi,\tilde g)$ given by
	\[\tilde g=g+\tau e^0\otimes e^0, \qquad \phi(x)=J(x)+\tau g(b,x)e_0, \qquad \phi(e_0)=-b, \quad x\in\g.\]
\end{proposition}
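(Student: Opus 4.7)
The statement is the converse of Corollary~\ref{cor:reductiontokahler}, so my strategy is to reverse-engineer its proof. I will in order (a) verify that the prescribed brackets define Lie algebras $\g$ and $\tilde\g$; (b) verify that $(\phi,\xi,\eta,\tilde g)$ is an almost contact metric structure via Lemma~\ref{lemma:acstandard}; and (c) verify the Sasaki condition through Proposition~\ref{prop:sasakistandard}. Since $b=-\phi(e_0)$ lies in the center of $\g$ by construction, the $\lie z$-standard assertion is then automatic.

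For (a), both 2-forms $2\omega$ and $\check D\omega$ on $\check\g$ are closed---the first because $\omega$ is the K\"ahler form, the second because derivations commute with the Chevalley--Eilenberg differential---so they define a central extension $\g=\check\g\oplus\Span{b,\xi}$ with $d\xi^\flat=2\omega$ and $db^\flat=\check D\omega$. Extending to $\tilde\g$ requires only that $\ad e_0$ be a derivation of $\g$; on pairs involving $b$ or $\xi$ this is trivial, while on pairs $x,y\in\check\g$ the derivation identity reduces, after accounting for the cocycle components $-2\omega(x,y)\xi$ and $-\tau(\check D\omega)(x,y)b$, to the single 2-form relation $\check D^2\omega+h\check D\omega=0$. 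Translating via $\omega(\cdot,\cdot)=g(\cdot,J\cdot)$ and using $[J,\check D]=0$ (which forces $[J,\check D^*]=0$, since $J$ is $g$-skew), this relation unfolds to $4(\check D^s)^2+2[\check D^s,\check D^a]-2h\check D^s=0$, which is precisely the quadratic hypothesis on $\check D$. This is the unique place in the proof where that hypothesis enters.

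For (b), Lemma~\ref{lemma:acstandard} requires $D(\xi)=0$, $(\ad\xi)^s=(\ad b)^*\xi=0$, the formula \eqref{eqn:acphi} for $\phi$, and the metric identity \eqref{eqn:acg}. The first three conditions are immediate from the construction together with the centrality of $\xi,b$ in $\g$. The formula for $\phi$ reduces to checking that $\tfrac12(\ad w)^*\xi=J(w)$ for $w\in\check\g$ (a direct consequence of $d\xi^\flat|_{\check\g}=2\omega$) and vanishes for $w\in\Span{b,\xi}$ (by centrality). Equation \eqref{eqn:acg} then follows from the pseudo-Hermitian identity $\check g(J\cdot,J\cdot)=\check g(\cdot,\cdot)$, together with $g(b,b)=\tau$ and the orthogonality of $\Span{b,\xi}$ with $\check\g$.

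For (c), most conditions of Proposition~\ref{prop:sasakistandard} become immediate once centrality of $b$ and $\xi$ is invoked: $D^a(b)=-\tau\xi$ is part of the prescription, $D^a(d\eta)=0$ holds because $\check D^a$ is $g$-antisymmetric and commutes with $J$ (forcing $\check D^a\omega=0$), and equation \eqref{eqn:sasakib} collapses to the symmetry $d\eta(\check D^s x,y)=d\eta(x,\check D^s y)$, which is a restatement of $[J,\check D^s]=0$. The one genuinely substantial identity is \eqref{eqn:sasakietax}, equivalently \eqref{eqn:sasakietaxnabla}: $\eta\wedge x^\flat=\tfrac12\nabla_x d\eta+\tau b^\flat\wedge D^s(x)^\flat$. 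I would verify this by splitting $x\in\g$ according to the orthogonal decomposition $\check\g\oplus\Span{b}\oplus\Span{\xi}$; on the $\check\g$-component, the pseudo-K\"ahler identity $\check\nabla\omega=0$ combined with the bracket formula in $\g$ produces the required match, while on the $b$- and $\xi$-components the cocycle identities $db^\flat=\check D\omega$ and $d\xi^\flat=2\omega$ take care of it. This pointwise verification is the main obstacle of the proof: unlike the other conditions---each a direct transcription of a single hypothesis---it couples the Levi-Civita connection of $\g$, the central-extension brackets, and the symmetric part $\check D^s$, and therefore requires a careful component-by-component check.
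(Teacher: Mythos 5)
Your strategy coincides with the paper's: both proofs reduce the statement to verifying the hypotheses of Proposition~\ref{prop:sasakistandard}, and your observation that the quadratic relation $[\check D^s,\check D^a]=h\check D^s-2(\check D^s)^2$ enters exactly once, as the translation of the Jacobi identity for $\ad e_0$ (equivalently of $\check D^2\omega+h\check D\omega=0$) via $\omega=g(\cdot,J\cdot)$ and $[J,\check D]=[J,\check D^*]=0$, is precisely the derivation computation of Corollary~\ref{cor:reductiontokahler} that the paper invokes verbatim. Your handling of \eqref{eqn:sasakib}, of $D^a(d\eta)=0$ and of $D^a(b)=-\tau\xi$ also matches the paper; the detour through Lemma~\ref{lemma:acstandard} is harmless but redundant, since the converse direction of Proposition~\ref{prop:sasakistandard} already subsumes it. The one step you should tighten is \eqref{eqn:sasakietax}: the effective decomposition is not of the vector $x$ but of the $2$-form identity itself relative to $\Span{b,\xi}^\perp\oplus\Span{b}\oplus\Span{\xi}$. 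Its restriction to $\Lambda^2\Span{b,\xi}^\perp$ is the parallelism of $\omega$ (via the Riemannian submersion $\g\to\check\g$); its contraction with $\xi$ is \eqref{eqn:xflatequalsxihookdeta}, which holds by construction because $(\ad x)^*\xi=2Jx$ and $J^2=-\id$; and its contraction with $b$ is \emph{not} a consequence of $\check\nabla\omega=0$ at all --- it requires the identity $(\ad x)^*b=-2J(\check D^s x)$, i.e.\ the hypothesis $db^\flat=\check D\omega$ once more, after which it reduces to $J^2(\check D^sx)=-\check D^sx$. For $x\in\check\g$ all three sectors occur simultaneously, so ``the pseudo-K\"ahler identity combined with the bracket formula'' does not by itself account for the cross terms pairing $\check\g$ with $b$; with that correction your argument is complete and is the paper's.
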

\begin{proof}
	The fact that $D=\check D + \tau b^\flat\otimes (hb-2\tau\xi)$ is a derivation is proved as in Corollary~\ref{cor:reductiontokahler}.
	
	Then we use Proposition~\ref{prop:sasakistandard}. To prove~\eqref{eqn:sasakib}, write
	\begin{multline*}
		db^\flat(y,x)= \check D\omega(y,x)=-\omega(\check Dy,x)-\omega(y,\check Dx)=-g(\check Dy,Jx)-g(y,J\check Dx)\\
		=-g(y,(\check D^*J+J\check D)x)=-g(y,J(\check D+\check D^*)x)=-2\omega(y,\check D^sx)=-d\eta(y,\check D^sx);
	\end{multline*}
	then $D^s(x)\hook d\eta+x\hook db^\flat=0$, which is equivalent to~\eqref{eqn:sasakib} since $b$ is central.
	
	To prove~\eqref{eqn:sasakietaxnabla}, notice that projecting this equation to $\Lambda^2\check\g$ simply says that $\omega$ is parallel on $\check\g$. The interior product with $\xi$ yields~\eqref{eqn:xflatequalsxihookdeta}, which holds by construction. Finally, taking interior product of~\eqref{eqn:sasakietaxnabla} with $b$ and using the fact that $D^s(b)\in\Span{b,\xi}$, we compute
	\begin{multline*}
		0=\frac14b\hook (\alpha_x-(\ad x)^*d\eta +\Lie_x d\eta)+ D^s(x)^\flat\\
		=\frac14((\ad x)^*b\hook d\eta)+ D^s(x)^\flat =\bigl(\frac12 J((\ad x)^*b)+D^s(x))^\flat.
	\end{multline*}
	We also have $\ad(x)^*b=\ad (D^s(x))^*\xi=-2J(D^s(x))$. Therefore, this equation reduces to $J^2(D^s(x))=-D^s(x)$, which is automatically satisfied.
	
	The other hypotheses of Proposition~\ref{prop:sasakistandard} are trivially satisfied; therefore, $\tilde \g$ has a Sasaki structure with
	\[\phi(w)=\frac12(\ad w)^*\xi + \tau g(b,w)e_0=-w\hook \omega +\tau(g,b,w)e_0=Jw+ \tau(g,b,w)e_0.\qedhere\]
\end{proof}

\begin{remark}
	It is no loss of generality to assume $h\geq 0$; indeed, changing the sign of $\check D$, $e_0$, $b$ and $h$ gives the same Sasaki Lie algebra up to isometric isomorphism.
\end{remark}

\begin{remark}
	The hypotheses of Proposition~\ref{prop:constructive} are preserved if one rescales both $h$ and $\check D$. This yields different metrics on $\tilde \g$, which are however related by a
	$\mathcal{D}$-homothety (in particular, they have different curvature).
	
	Accordingly, one can assume that either $h=0$ or $h=2$ up to $\mathcal{D}$-homothety. The condition $h=0$ implies that $\Tr (\check D^s)^2=0$. If $\check\g$ is Riemannian, $\check D^s$ is diagonalizable, so $h=0$ implies that $\check D$ is skew-symmetric.
\end{remark}

\begin{remark}
	\label{remark:reversesign}
	One can always reverse the sign of the metric $\check g$ and the $2$-form $\omega$ and obtain a different Sasaki metric on an isomorphic Lie algebra $\tilde\g'$; the isomorphism is realized by the mapping $b\mapsto -b'$, $\xi\mapsto -\xi'$.
\end{remark}

Let $(\check \g_0,J_0,g_0,\omega_0)$, $(\check \g_1,J_1,g_1,\omega_1)$ be pseudo-K\"ahler Lie algebras, with $\g_1$ abelian. Let $\rho\colon\check\g_0\to\gl(\check\g_1)$ be a representation such that
\begin{equation}
	\label{eqn:pseudokahlersemidirect}
	\rho(X)\omega_1=0, \qquad [J_1,\rho(X)]+[\rho (J_0X),J_1]J_1=0.
\end{equation}
Then $\check \g_0\ltimes\check\g_1$ has an almost Hermitian structure $(g,J,\omega)$, with $g=g_0+g_1$, $\omega=\omega_0+\omega_1$, and $J=\begin{pmatrix} J_1 & 0 \\ 0 & J_2\end{pmatrix}$. It is straightforward to check that $\omega$ is closed and $J$ integrable, i.e. $\check \g_0\ltimes\check\g_1$ is pseudo-K\"ahler. In addition, the projection $\pi_1$ on the factor $\check\g_1$ is a derivation, giving a one-parameter family of derivations $\check D=\frac h2\pi_1$ that satisfy the hypotheses of Proposition~\ref{prop:constructive}. The resulting Sasaki extension $\tilde\g$ takes the form
\begin{equation}
	\label{eqn:gradedsasaki}
	\begin{split}
		(\check \g_0\ltimes\check\g_1\oplus\Span{b,\xi})\rtimes\Span{e_0}, \qquad d\xi^\flat=2\omega, \qquad db^\flat=-h \omega,\\
		[e_0,X_0]=0, \qquad [e_0,X_1]=\frac h2 X_1, \qquad [e_0,b]=hb-2\tau\xi, \qquad [e_0,\xi]=0,
	\end{split}
\end{equation}
where $X_0$ denotes the generic element of $\check \g_0$ and $X_1$ the generic element of $\check\g_1$.

\begin{proposition}
	\label{prop:canassumeDsymmetric}
	In the hypotheses of Proposition~\ref{prop:constructive}, if $\check D^s$ is a derivation and $[\check D^s,\check D^a]=0$, we can assume up to isometry that $\check\g$ is a semidirect product $\check\g=\check\g_0\ltimes_\rho\check\g_1$, where $\check\g_0, \check\g_1$ are pseudo-K\"ahler with $\check\g_1$ abelian, $\check D=\frac h2\pi_1$ and $\tilde\g$ takes the form~\eqref{eqn:gradedsasaki}.
\end{proposition}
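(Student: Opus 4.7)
The plan is to use the quadratic identity to turn $\check D^s$ into a scalar multiple of a projection, read off the semidirect product structure from its eigenspaces, and finally remove the antisymmetric part $\check D^a$ via Proposition~\ref{prop:pseudoAzencottWilson}.

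First, I would combine the hypothesis $[\check D^s,\check D^a]=0$ with the identity $[\check D^s,\check D^a]=h\check D^s-2(\check D^s)^2$ from Corollary~\ref{cor:reductiontokahler} to obtain $(\check D^s)^2=\frac{h}{2}\check D^s$. When $h\neq 0$ the polynomial $t(t-h/2)$ has distinct real roots, so $\check D^s$ is diagonalizable over $\R$ and $\frac{2}{h}\check D^s$ is a projection; the case $h=0$ reduces, via Corollary~\ref{cor:diagonalizable}, to $\check D^s=0$. In either case, set $\check\g_0=\ker\check D^s$ and $\check\g_1=\operatorname{Im}\check D^s$. Because $\check D^s$ is a derivation with eigenvalues in $\{0,h/2\}$, standard weight reasoning yields $[\check\g_0,\check\g_0]\subseteq\check\g_0$, $[\check\g_0,\check\g_1]\subseteq\check\g_1$, and $[\check\g_1,\check\g_1]\subseteq\{0\}$ (the $h$-eigenspace is trivial when $h\neq 0$, while $\check\g_1=0$ when $h=0$). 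So $\check\g=\check\g_0\ltimes\check\g_1$ is a semidirect product with $\check\g_1$ abelian. Moreover, $\check D^s$ being $g$-symmetric and commuting with $J$ (since $[J,\check D]=0$ and symmetry force $[J,\check D^s]=0$) shows that its eigenspaces are $g$-orthogonal and $J$-invariant, so each factor inherits a pseudo-K\"ahler structure; furthermore $\check\g_0$ and $\check\g_1$ are $\omega$-orthogonal, since for $x\in\check\g_0$ and $y\in\check\g_1$ one has $\omega(x,y)=g(x,Jy)$ with $Jy\in\check\g_1$ and $\check\g_1\perp\check\g_0$.

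Finally I would invoke Proposition~\ref{prop:pseudoAzencottWilson} with $\chi(e_0)$ defined to agree with $\check D^s$ on $\check\g$ and with $D$ on $\Span{b,\xi}$. By construction $\chi(e_0)^s=D^s$, while $[\chi(e_0),\ad e_0]$ decomposes as $[\check D^s,\check D^a]\oplus 0=0$ by hypothesis. The nontrivial verification is that $\chi(e_0)$ is a derivation of $\g$: expanding $[x,y]=[x,y]_{\check\g}-\tau\,db^\flat(x,y)\,b-d\eta(x,y)\,\xi$ for $x,y\in\check\g$ and using $db^\flat=\check D^s\omega$ together with $(\check D^s)^2=\frac{h}{2}\check D^s$, the Leibniz identity collapses to
\[\frac{h}{2}\bigl(\omega(\check D^s x,y)+\omega(x,\check D^s y)\bigr)=2\omega(\check D^s x,\check D^s y),\]
which is checked case-by-case on the decomposition $\check\g=\check\g_0\oplus\check\g_1$; the only nontrivial case is $x\in\check\g_0$, $y\in\check\g_1$, where it follows from the $\omega$-orthogonality just established. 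Proposition~\ref{prop:pseudoAzencottWilson} then produces an isometric Sasaki Lie algebra $\g\rtimes_{\chi(e_0)}\Span{e_0}$ whose K\"ahler reduction is still $\check\g_0\ltimes\check\g_1$ but now carries the derivation $\check D^s=\frac{h}{2}\pi_1$, giving exactly the form~\eqref{eqn:gradedsasaki}.

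The main obstacle is the derivation check for $\chi(e_0)$ described above; everything else is a routine consequence of the quadratic identity and the commutations with $J$. A secondary subtlety is the case $h=0$, where the quadratic relation alone does not force $\check D^s$ to be diagonalizable, and Corollary~\ref{cor:diagonalizable} (possibly together with a $\mathcal D$-homothety argument) must be brought to bear.
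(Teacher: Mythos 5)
Your route is essentially the paper's: from $[\check D^s,\check D^a]=0$ and the quadratic identity you get $(\check D^s)^2=\tfrac h2\check D^s$, you read off the grading $\check\g=\check\g_0\ltimes\check\g_1$ from the eigenspaces of the symmetric derivation $\check D^s$ (which are orthogonal and $J$\dash invariant), and you replace $\check D$ by $\check D^s$ via Proposition~\ref{prop:pseudoAzencottWilson} applied to $\chi(e_0)$ equal to $\check D^s$ on $\check\g$ and to $\ad e_0$ on $\Span{b,\xi}$; the paper performs the same two steps in the opposite order. Your explicit check that $\chi(e_0)$ is a derivation of $\g$ --- reducing the Leibniz rule to $\tfrac h2\bigl(\omega(\check D^sx,y)+\omega(x,\check D^sy)\bigr)=2\omega(\check D^sx,\check D^sy)$ and verifying it on the eigenspace decomposition using $\omega$\dash orthogonality of $\check\g_0$ and $\check\g_1$ --- is correct and supplies a detail the paper only asserts.

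The one genuine defect is your handling of $h=0$: you claim this case ``reduces, via Corollary~\ref{cor:diagonalizable}, to $\check D^s=0$''. This is circular, since Corollary~\ref{cor:diagonalizable} is itself deduced from Proposition~\ref{prop:canassumeDsymmetric}; and even setting circularity aside, that corollary takes diagonalizability of $\check D^s$ as a \emph{hypothesis}, so it cannot be used to upgrade $(\check D^s)^2=0$ to $\check D^s=0$. The paper instead argues directly that the minimal polynomial of the symmetrized derivation divides $ht-2t^2$ and concludes diagonalizability; for $h\neq0$ the roots are distinct and this is immediate, while for $h=0$ one must argue separately that a symmetric operator with square zero vanishes (automatic when the restriction of the metric to $\check\g$ is definite, and the point that genuinely requires care in indefinite signature). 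Replace the appeal to Corollary~\ref{cor:diagonalizable} with such a direct argument and the proof is complete.
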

\begin{proof}
	Write $\tilde\g=\g\rtimes\Span{e_0}$, where $\ad (e_0)=\check D + hb^*\otimes (hb-2\tau\xi)$.
	Then define
	\[\chi\colon\Span{e_0}\to\Der\g, \qquad \chi(e_0)=\check D^s+ hb^*\otimes (hb-2\tau\xi).\]
	Then $\chi(e_0)^s=\ad (e_0)^s$ and $[\chi(e_0),\ad e_0]=0$. Thus, the Lie algebra $\g\rtimes_\chi\Span{e_0}$ is isometric to the Lie algebra $\tilde\g$ constructed in Proposition~\ref{prop:constructive}. In other words, replacing $\check D$ with $\check D^s$ gives the same metric $\tilde g$ up to isometry. In addition, $\check D\omega=\check D^s\omega$, so $db^\flat$ is unchanged.
	
	By Proposition~\ref{prop:constructive}, the minimal polynomial of $\check D$ divides $p(t)=ht-2t^2$. Thus $\check D$ is diagonalizable over $\R$, and takes the form
	\[\begin{pmatrix} 0 & 0 \\ 0 & \frac h2 I \end{pmatrix}\]
	in some basis; since $\check D$ commutes with $J$, its eigenspaces are $J$-invariant. Since it is symmetric, they are orthogonal. Since a diagonalizable derivation defines a grading, we have $\check\g=\check\g_0\ltimes_\rho\check\g_1$, the K\"ahler form splits as $\omega_0+\omega_1$ and
	\[J=\begin{pmatrix}J_0 & 0 \\ 0 & J_1\end{pmatrix}.\]
	We have that $(\check \g_0,J_0,\omega_0)$ is K\"ahler, $\check\g_1$ is abelian, and~\eqref{eqn:pseudokahlersemidirect} holds.
\end{proof}

\begin{corollary}
	\label{cor:diagonalizable}
	In the hypotheses of Proposition~\ref{prop:constructive}, if $\check D^s$ is a derivation and it is diagonalizable over $\C$, then we can assume up to isometry that $\check\g$ is a semidirect product $\check\g=\check\g_0\ltimes_\rho\check\g_1$, where $\check\g_0, \check\g_1$ are pseudo-K\"ahler with $\check\g_1$ abelian, $\check D=\frac h2\pi_1$ and $\tilde\g$ takes the form~\eqref{eqn:gradedsasaki}.
\end{corollary}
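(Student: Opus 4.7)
The plan is to reduce Corollary~\ref{cor:diagonalizable} to Proposition~\ref{prop:canassumeDsymmetric} by showing that, under the extra assumption that $\check D^s$ is diagonalizable over $\C$, the bracket $[\check D^s,\check D^a]$ automatically vanishes. Once this is achieved, the hypotheses of Proposition~\ref{prop:canassumeDsymmetric} are met and its conclusion is exactly what we want.

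First I would work in the complexification $\check\g\otimes\C$ and use the decomposition into $\check D^s$-eigenspaces $\check\g\otimes\C=\bigoplus_\lambda V_\lambda$. Given a nonzero $v\in V_\lambda$, the quadratic relation $[\check D^s,\check D^a]=h\check D^s-2(\check D^s)^2$ from Proposition~\ref{prop:constructive} gives
$$\check D^s(\check D^a v)=\check D^a(\check D^s v)+\bigl(h\check D^s-2(\check D^s)^2\bigr)v=\lambda\,\check D^a v+(h\lambda-2\lambda^2)\,v.$$
Decomposing $\check D^a v=\sum_\mu u_\mu$ with $u_\mu\in V_\mu$ and comparing the $V_\lambda$-components on both sides yields $\lambda u_\lambda=\lambda u_\lambda+(h\lambda-2\lambda^2)v$, whence $(h\lambda-2\lambda^2)v=0$. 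Therefore every eigenvalue of $\check D^s$ lies in $\{0,h/2\}$.

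Since both candidate eigenvalues are real, the $\C$-diagonalization in fact descends to an $\R$-diagonalization: the real eigenspaces of the real operator $\check D^s$ complexify to the $V_\lambda$. In particular, the minimal polynomial of $\check D^s$ divides $t(t-h/2)$, i.e.\ $2(\check D^s)^2=h\check D^s$, and substituting back into the quadratic relation of Proposition~\ref{prop:constructive} gives $[\check D^s,\check D^a]=0$.

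At this point $\check D^s$ is a derivation (by hypothesis) and commutes with $\check D^a$, so Proposition~\ref{prop:canassumeDsymmetric} applies and provides the desired semidirect-product decomposition $\check\g=\check\g_0\ltimes_\rho\check\g_1$ with $\check\g_1$ abelian, $\check D=\frac{h}{2}\pi_1$, and $\tilde\g$ in the form~\eqref{eqn:gradedsasaki}. The only substantive step is the eigenvalue argument above; everything else is an immediate appeal to the preceding proposition, and no serious obstacle is expected.
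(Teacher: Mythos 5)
Your proof is correct, and it follows the same overall strategy as the paper: complexify, show that the quadratic relation forces $[\check D^s,\check D^a]=0$, and then invoke Proposition~\ref{prop:canassumeDsymmetric}. The mechanism by which you kill the commutator is different, though. The paper picks an orthonormal basis of eigenvectors of $(\check D^s)^\C$ (using that $\check D^s$ is symmetric, so its eigenspaces are mutually orthogonal and nondegenerate); in such a basis $(\check D^a)^\C$ is an antisymmetric matrix and hence has zero diagonal, so the left-hand side of $[\check D^s,\check D^a]=h\check D^s-2(\check D^s)^2$ has zero diagonal while the right-hand side is diagonal, forcing both sides to vanish. You instead compare $V_\lambda$-components of $\check D^s(\check D^a v)$ for an eigenvector $v$, concluding directly that every eigenvalue satisfies $h\lambda-2\lambda^2=0$, hence $2(\check D^s)^2=h\check D^s$ and the commutator vanishes. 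Your variant is slightly more economical: it uses only diagonalizability of $\check D^s$ and never the metric, so you avoid the (true but not entirely immediate) fact that a diagonalizable symmetric operator on a complex quadratic space admits an orthonormal eigenbasis; it also yields the eigenvalue constraint $\lambda\in\{0,h/2\}$ as a byproduct, which the paper only recovers later inside the proof of Proposition~\ref{prop:canassumeDsymmetric}. Both arguments are valid and of comparable length.
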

\begin{proof}
	Denote by $\check\g^\C$ the complexification of $\check\g$, with the scalar product obtained by complexifying the scalar product of $\check\g$. The complexified endomorphisms $(\check D^s)^\C\colon\check\g^\C\to\check\g^\C$, $(\check D^a)^\C\colon\check\g^\C\to\check\g^\C$ are symmetric and antisymmetric, respectively. Furthermore, we get
	\begin{equation}
		\label{eqn:complexified}
		[(\check D^s)^\C, (\check D^a)^\C]=h(\check D^s)^\C-2((\check D^s)^\C)^2.
	\end{equation}
	By hypothesis, there exists an orthonormal basis of eigenvectors of $(\check D^s)^\C$. Then $(\check D^s)^\C$ is diagonal in this basis, and $(\check D^a)^\C$ has zero on the diagonal. Therefore, $[(\check D^s)^\C, (\check D^a)^\C]$ has zero on the diagonal, so~\eqref{eqn:complexified} implies that it vanishes and we can apply Proposition~\ref{prop:canassumeDsymmetric}.
\end{proof}
In particular, Corollary~\ref{cor:diagonalizable} classifies $\lie z$-standard Sasaki structures that reduce to an abelian K\"ahler Lie algebra, as positive-definiteness of the metric implies that $\check D^s$ is automatically a diagonalizable derivation in this case.

The case of indefinite signature is more flexible, as we will see below. Notice that the signature of a pseudo-K\"ahler metric is necessarily of the form $(2p,2q)$.
\begin{theorem}
	\label{thm:central5}
	Let $\tilde\g$ be a Lie algebra of dimension $5$ with a $\lie z$-standard Sasaki structure. Then, up to isometry and $\mathcal{D}$-homothety, $\tilde\g$ is one of
	\begin{gather*}
		(0,0,0,-2e^{12}-2\tau e^{35},0),\\
		(0,0,2e^{35},-2e^{12}-2\tau e^{35},0),\\
		(e^{15},e^{25},2\tau e^{12}+2e^{35},-2e^{12}-2\tau e^{35},0),
	\end{gather*}
	and the Sasaki structure is given by
	\[\tilde g=\pm(e^1\otimes e^1+e^2\otimes e^2)+\tau e^3\otimes e^3+e^4\otimes e^4+\tau e^5\otimes e^5,
	\quad \xi=e_4, \quad \Phi=-e^{12}-\tau e^{35}.\]
\end{theorem}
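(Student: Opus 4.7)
The plan is to apply the reduction machinery of the previous section and enumerate all possibilities in dimension five. By Corollary~\ref{cor:reductiontokahler}, any $\lie z$-standard Sasaki structure on a five-dimensional $\tilde\g$ produces a pseudo-K\"ahler reduction $\check\g=\g/\Span{b,\xi}$ of real dimension $2$, together with a derivation $\check D$ satisfying the constraints listed there. Because $\check\g$ is two-dimensional, it is abelian, so every endomorphism is automatically a derivation. Moreover, a Hermitian metric on a two-dimensional real vector space is forced to be definite, hence $\check D^s$ is diagonalizable over $\mathbb{R}$. These are exactly the hypotheses needed to invoke Corollary~\ref{cor:diagonalizable}, allowing us to replace $\tilde\g$ up to isometry by one in the graded form~\eqref{eqn:gradedsasaki}, with $\check\g=\check\g_0\ltimes\check\g_1$ and $\check D=\tfrac{h}{2}\pi_1$.

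The next step is the case analysis. Since $\check D$ commutes with $J$, the subspace $\check\g_1$ is $J$-invariant, and in a two-dimensional complex vector space this forces $\dim\check\g_1\in\{0,2\}$. The remark following Proposition~\ref{prop:constructive} lets me normalize $h\in\{0,2\}$ by a $\mathcal{D}$-homothety rescaling of $(h,\check D)$, together with the sign reversal $(h,\check D,e_0,b)\mapsto(-h,-\check D,-e_0,-b)$ of Remark~\ref{remark:reversesign} ensuring $h\geq 0$. This produces exactly three inequivalent configurations: (i) $h=0$ with $\check D=0$; (ii) $h=2$ with $\check\g_1=0$, forcing $\check D=0$ while $[e_0,b]=2b-2\tau\xi$ is nontrivial; (iii) $h=2$ with $\check\g_1=\check\g$, forcing $\check D=I$.

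In each case I will pick a basis $e_1,e_2$ of $\check\g$ with $Je_1=e_2$ so that $\omega=\pm e^{12}$, and complete to a basis of $\tilde\g$ by $b=e_3$, $\xi=e_4$, $e_0=e_5$. Writing out the brackets of~\eqref{eqn:gradedsasaki}, namely $d\xi^\flat=2\omega$, $db^\flat=-h\omega$ restricted to $\check\g_1$, together with $[e_0,X_1]=\tfrac{h}{2}X_1$ and $[e_0,b]=hb-2\tau\xi$, yields for cases (i), (ii), (iii) respectively the three Lie algebras listed in the statement. The common Sasaki data $\tilde g$, $\xi=e_4$ and $\Phi=-e^{12}-\tau e^{35}$ then follows directly from the formulas of Proposition~\ref{prop:constructive}, with the sign $\pm$ on $e^1\otimes e^1+e^2\otimes e^2$ reflecting the sign of the definite metric $\check g$.

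The conceptual content is exhausted by the case analysis; the main obstacle is bookkeeping, namely ensuring that the normalization of $\omega$ and of $b^\flat$ is compatible with the stated signs in $d\xi^\flat$ and $db^\flat$, and checking that the constraint $[\check D^s,\check D^a]=h\check D^s-2(\check D^s)^2$ places no further restriction on cases (ii) and (iii) (for (ii) it is trivial; for (iii), with $\check D^s=I$ and $\check D^a=0$, both sides equal $h-2=0$). Once these are in place, the translation of~\eqref{eqn:gradedsasaki} into Salamon-style differentials reproduces the three algebras verbatim.
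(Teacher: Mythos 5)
Your proposal is correct and follows essentially the same route as the paper: reduce to the two-dimensional (hence abelian, definite) K\"ahler quotient, use commutativity of $\check D^s$ and $\check D^a$ to pass to the graded form via Proposition~\ref{prop:canassumeDsymmetric}/Corollary~\ref{cor:diagonalizable}, normalize $h\in\{0,2\}$, and enumerate the three resulting cases, with Remark~\ref{remark:reversesign} supplying the $\pm$ sign. The only quibble is terminological: $\check\g$ is a \emph{one}-dimensional complex vector space (two-dimensional real), which is precisely why its only $J$-invariant subspaces are $0$ and $\check\g$.
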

\begin{proof}
	The K\"ahler reduction $\check\g$ is a nilpotent Lie algebra of dimension two, hence abelian. Assume first that $\check\g$ has positive-definite signature. In some basis $\{e_1,e_2\}$, we have
	\[\check g=e^1\otimes e^1+e^2\otimes e^2, \qquad \omega=-e^{12}, \qquad J=e^1\otimes e_2 - e^2\otimes e_1.\]
	Derivations that commute with $J$ lie in $\Span{I,J}$. In particular, $\check D^s$ commutes with $\check D^a$, so Proposition~\ref{prop:canassumeDsymmetric} implies that up to isometry we can assume $\check D=0$ or $\check D=\frac h2I$.
	
	Up to $\mathcal{D}$-homothety, we can assume that either $h=0$ or $h=2$.
	
	For $h=0$,~\eqref{eqn:gradedsasaki} gives
	\[\tilde\g=(0,0,0,-2e^{12}-2\tau e^{35},0);\]
	for $h=2$, either $\check D=0$ and
	\[\tilde\g=(0,0,2e^{35},-2e^{12}-2\tau e^{35},0),\]
	or $\check D=I$ and
	\[\tilde\g=(e^{15},e^{25},2\tau e^{12}+2e^{35},-2e^{12}-2\tau e^{35},0).\]
	
	In either case, the metric is
	\[\tilde g=e^1\otimes e^1+e^2\otimes e^2+\tau e^3\otimes e^3+e^4\otimes e^4+\tau e^5\otimes e^5.\]
	Taking into consideration the negative-definite metric on $\check\g$ has the effect of adding the $\pm$ signs, as per Remark~\ref{remark:reversesign}.
\end{proof}
Notice that the third Lie algebra appearing in Theorem~\ref{thm:central5} is Example~\ref{ex:43nontrivialcenter}.

We proceed to give a list of the
$7$-dimensional Lie algebras with a $\lie z$-standard Sasaki structure that reduces to an abelian pseudo-K\"ahler Lie algebra $\check\g$ up to isometry and $\mathcal{D}$-homothety. This list is given in Table~\ref{table:ZstdSasaki7D}, where we write the diagonal metric $\tilde{g}$ as a line vector with respect to the basis $\{e^1,\dots,e^7\}$, using the convention that $[1]_n$ is a vector of $n$ elements, each equal to $1$. For example $[1]_4=(1,1,1,1)$ and $(\pm [1]_4, \tau, +1, \tau )$ represents the metric
\[\tilde g=\pm(e^1\otimes e^1+e^2\otimes e^2+e^3\otimes e^3+e^4\otimes e^4)+\tau e^5\otimes e^5+e^6\otimes e^6+\tau e^7\otimes e^7.\]

\begin{table}[th]
	\centering
	\caption{$7$-dimensional Lie algebras with a $\lie z$-standard Sasaki structure that reduces to an abelian pseudo-K\"ahler Lie algebra $\check\g$ up to isometry and $\mathcal{D}$-homothety\label{table:ZstdSasaki7D}}
	{\footnotesize
		\begin{tabular}{l C C}
			\toprule
			n. & \tilde{\g} & \textnormal{Metric } \tilde{g} \\
			\midrule
			1. & 0,0,0,0,0,-2e^{12}-2e^{34}-2\tau e^{57},0 & (\pm [1]_4, \tau, +1, \tau )\\[5pt]
			2. & 0,0,0,0,2 e^{57},-2e^{12}-2e^{34}-2\tau e^{57},0 & (\pm [1]_4, \tau, +1, \tau )\\[5pt]
			3. & 0,0,e^{37},e^{47},2\tau e^{34}+2 e^{57},-2e^{12}-2e^{34}-2\tau e^{57},0 & (\pm [1]_4, \tau, +1, \tau )\\[5pt]
			4. & e^{17},e^{27},e^{37},e^{47},2\tau e^{12}+2\tau e^{34}+2 e^{57},-2e^{12}-2e^{34}-2\tau e^{57},0 & (\pm [1]_4, \tau, +1, \tau )\\[5pt]
			5. & 0,0,0,0,0,-2e^{12}+2e^{34}-2\tau e^{57},0 & (\pm [1]_2,\mp[1]_2, \tau, +1, \tau )\\[5pt]
			6. & 0,0,0,0,2 e^{57},-2e^{12}+2e^{34}-2\tau e^{57},0 & (\pm [1]_2,\mp[1]_2, \tau, +1, \tau )\\[5pt]
			7. & 0,0,e^{37},e^{47},-2\tau e^{34}+2 e^{57},-2e^{12}+2e^{34}-2\tau e^{57},0 & (\pm [1]_2,\mp[1]_2, \tau, +1, \tau )\\[5pt]
			8. & e^{17},e^{27},e^{37},e^{47},2\tau e^{12}-2\tau e^{34}+2 e^{57},-2e^{12}+2e^{34}-2\tau e^{57},0 & (\pm [1]_2,\mp[1]_2, \tau, +1, \tau )\\[5pt]
			\multirow{3}{*}{9.}& \multicolumn{1}{L}{\frac{1}{2}e^{17}+2\lambda e^{27}-\frac12e^{37}-\lambda e^{47},-2\lambda e^{17}+\frac12e^{27}+\lambda e^{37}-\frac12e^{47},} & \multirow{3}{*}{$(\pm [1]_2,\mp[1]_2, \tau, +1, \tau )$}\\
			&\multicolumn{1}{C}{\frac12e^{17}+\lambda e^{27}-\frac12e^{37},-\lambda e^{17}+\frac12e^{27}-\frac12e^{47},}\\
			&\multicolumn{1}{R}{\tau e^{12}-\tau e^{14}+\tau e^{23}+\tau e^{34},-2e^{12}+2e^{34}-2\tau e^{57},0}\\[5pt]
			\multirow{3}{*}{10.} & \multicolumn{1}{L}{\frac12e^{17}+2\lambda e^{27}-\frac32e^{37}-\lambda e^{47},-2\lambda e^{17}+\frac12e^{27}+\lambda e^{37}-\frac32e^{47},} & \multirow{3}{*}{$(\pm [1]_2,\mp[1]_2, \tau, +1, \tau )$}\\
			&\multicolumn{1}{C}{-\frac12e^{17}+\lambda e^{27}-\frac12e^{37},-\lambda e^{17}-\frac12e^{27}-\frac12e^{47},}\\
			&\multicolumn{1}{R}{\tau e^{12}-\tau e^{14}+\tau e^{23}+\tau e^{34}+2e^{57},-2e^{12}+2e^{34}-2\tau e^{57},0}\\[5pt]
			\multirow{3}{*}{11.} & \multicolumn{1}{L}{\frac32e^{17}+2\lambda e^{27}+\frac12e^{37}-\lambda e^{47},-2\lambda e^{17}+\frac32e^{27}+\lambda e^{37}+\frac12e^{47},} & \multirow{3}{*}{$(\pm [1]_2,\mp[1]_2, \tau, +1, \tau )$}\\
			&\multicolumn{1}{C}{\frac32e^{17}+\lambda e^{27}+\frac12e^{37},-\lambda e^{17}+\frac32e^{27}+\frac12e^{47},}\\
			&\multicolumn{1}{R}{3\tau e^{12}-\tau e^{14}+\tau e^{23}-\tau e^{34}+2e^{57},-2e^{12}+2e^{34}-2\tau e^{57},0}\\
			\bottomrule
		\end{tabular}
	}
\end{table}

\begin{theorem}
	\label{thm:central7}
	Let $\tilde\g$ be a Lie algebra of dimension $7$ with a $\lie z$-standard Sasaki structure that reduces to an abelian pseudo-K\"ahler Lie algebra $\check\g$. Then, up to isometry and $\mathcal{D}$-homothety, the metric Lie algebra $(\tilde\g,\tilde{g})$ is one of the Lie algebras appearing in Table~\ref{table:ZstdSasaki7D} and the Sasaki structure is given by
	\[\xi=(e^6)^\flat=e_6,\qquad \eta = e^6, \qquad 2\Phi=d \eta=d e^{6}\]
	with respect to the basis $\{e^1,\dots, e^7\}$ of Table~\ref{table:ZstdSasaki7D}.
\end{theorem}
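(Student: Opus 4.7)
The overall strategy is to invert the reduction of Corollary~\ref{cor:reductiontokahler}: by Proposition~\ref{prop:constructive}, every $\lie z$-standard Sasaki structure in dimension $7$ whose K\"ahler reduction $\check\g$ is abelian arises from the data of a $4$-dimensional abelian pseudo-K\"ahler algebra $(\check\g,J,\omega,\check g)$ together with a derivation $\check D$ commuting with $J$ and satisfying $[\check D^s,\check D^a]=h\check D^s-2(\check D^s)^2$. The plan is to enumerate this data up to isometry and $\mathcal{D}$-homothety, and in each case read off $\tilde\g$ from~\eqref{eqn:gradedsasaki}. Using Remark~\ref{remark:reversesign}, the signature of $\check g$ may be taken to be either $(4,0)$ or $(2,2)$; using $\mathcal{D}$-homothety, $h$ may be normalized to lie in $\{0,2\}$; and fixing a $J$-adapted basis makes $\omega$ into one of the standard forms appearing in the table.

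When $\check D^s$ is diagonalizable over $\C$, Corollary~\ref{cor:diagonalizable} applies and forces $\check\g=\check\g_0\ltimes\check\g_1$ with $\check D=(h/2)\pi_1$, where $\pi_1$ is the orthogonal projection onto a $J$-invariant non-degenerate subspace $\check\g_1$ of even real dimension. In the definite case $\pi_1$ has rank $0, 2$, or $4$, and the rank-$2$ subspace is unique up to a unitary change of basis; combined with $h\in\{0,2\}$ this produces entries 1--4. In the $(2,2)$ case, non-degenerate $J$-invariant $2$-planes split into orbits of signature $(2,0)$ and $(0,2)$, which are interchanged by the sign flip of Remark~\ref{remark:reversesign}; enumerating by the rank of $\pi_1$ and the value of $h$ then yields entries 5--8.

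The main obstacle is the non-diagonalizable case, which produces entries 9--11. Here $\check D^s$ is symmetric with respect to the neutral $(2,2)$-metric, commutes with $J$, and together with $\check D^a$ satisfies the quadratic relation, but is not diagonalizable over $\C$. I would enumerate the possible real Jordan normal forms of $\check D^s$ compatible with $J$-commutation and symmetry on signature $(2,2)$; the list is short, and in each case the admissible eigenvalues are further constrained by forcing $\check D^s(2\check D^s-h\id)$ to lie in the image of the commutator map $\ad\check D^s$ restricted to skew-symmetric endomorphisms. For each admissible $\check D^s$, the skew-symmetric part $\check D^a$ then solves $[\check D^s,\check D^a]=h\check D^s-2(\check D^s)^2$ up to a one-parameter family parametrized by $\lambda$. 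Substituting each resulting $\check D$ into~\eqref{eqn:gradedsasaki} yields the three families appearing in entries 9, 10, and 11, and verifying that different choices of normal form produce non-isometric Lie algebras completes the classification.
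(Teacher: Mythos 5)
Your skeleton is the paper's: invert the reduction via Proposition~\ref{prop:constructive}, normalize $h\in\{0,2\}$ by $\mathcal{D}$-homothety and the signature of $\check g$ by Remark~\ref{remark:reversesign}, dispatch the diagonalizable case with Corollary~\ref{cor:diagonalizable} (entries 1--8), and treat the non-diagonalizable neutral case by putting $\check D^s$ in normal form and solving the quadratic relation for $\check D^a$. Two points in the last step need more than you give. First, ``enumerate the real Jordan normal forms of $\check D^s$'' must be done up to the group that actually preserves the data, namely $\mathrm{U}(1,1)$ acting on symmetric $J$-commuting endomorphisms; the paper does this by identifying the traceless part of $\check D^s$ with an element of $i\su(1,1)$ and transporting the $\SL(2,\R)$-conjugacy classification of non-semisimple elements through the Cayley isomorphism, which pins down a single normal form $\check D^s=\bigl(\begin{smallmatrix}(t+\frac12)I&-\frac12 I\\ \frac12 I&(t-\frac12)I\end{smallmatrix}\bigr)$. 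A bare Jordan-form list does not automatically respect the metric and $J$, so you would still have to quotient by this action.

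Second, and more substantively: given that normal form, the solution set of $[\check D^s,\check D^a]=h\check D^s-2(\check D^s)^2$ in the skew-symmetric $J$-commuting endomorphisms is an affine space modeled on the centralizer of $\check D^s$ in $\lie u(1,1)$, which is \emph{two}-dimensional (it contains $J$ and the centralizer of the corresponding nilpotent in $\su(1,1)$); in the paper's coordinates these are the parameters $\lambda_2$ and $\lambda_8$. Your claim that $\check D^a$ is determined ``up to a one-parameter family'' is therefore not what the equation gives you. The reduction to one parameter is not algebraic but metric: the paper invokes Proposition~\ref{prop:pseudoAzencottWilson} to replace $\check D$ by any commuting derivation $\check D'$ with $\check D'^s=\check D^s$, which shows the isometry class of $\tilde\g$ depends only on the combination $\lambda_2-\lambda_8$ and lets one set $\lambda_8=0$. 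Without this step (or an equivalent identification of isometric but non-isomorphic extensions) your entries 9--11 would appear as two-parameter families containing redundant isometric members, and the statement ``up to isometry'' would not be established. The rest of the proposal, including the solvability criterion that $h\check D^s-2(\check D^s)^2$ lie in the image of $\ad\check D^s$ on skew-symmetric endomorphisms, is sound.
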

\begin{proof}
	We first consider the case where $\check\g$ is positive definite, applying Corollary~\ref{cor:diagonalizable} and proceeding as in the proof of Theorem~\ref{thm:central5}.
	
	If $h=0$, we get
	\[(0,0,0,0,0,-2e^{12}-2e^{34}-2\tau e^{57},0);\]
	for $h=2$, we have the three possibilities $\check D=0$, $\check D=e^3\otimes e_3+e^4\otimes e_4$, $\check D=I$, corresponding to
	\begin{gather*}
		(0,0,0,0,2 e^{57},-2e^{12}-2e^{34}-2\tau e^{57},0),\\
		(0,0,e^{37},e^{47},2\tau e^{34}+2 e^{57},-2e^{12}-2e^{34}-2\tau e^{57},0),\\
		(e^{17},e^{27},e^{37},e^{47},2\tau e^{12}+2\tau e^{34}+2 e^{57},-2e^{12}-2e^{34}-2\tau e^{57},0).
	\end{gather*}
	The negative definite case gives rise to the same Lie algebras, with the restriction of the metric to $\check\g$ of opposite sign.
	
	In the neutral case, we can assume
	\begin{gather*}
		\check g=e^1\otimes e^1+e^2\otimes e^2-e^3\otimes e^3-e^4\otimes e^4, \\ \omega=-e^{12}+e^{34}, \\
		J=e^1\otimes e_2 - e^2\otimes e_1+e^3\otimes e_4-e^4\otimes e_3.
	\end{gather*}
	If $\check D^s$ is diagonalizable,
	Corollary~\ref{cor:diagonalizable} applies and computations as above yield
	\begin{gather*}
		(0,0,0,0,0,-2e^{12}+2e^{34}-2\tau e^{57},0),\\
		(0,0,0,0,2 e^{57},-2e^{12}+2e^{34}-2\tau e^{57},0),\\
		(0,0,e^{37},e^{47},-2\tau e^{34}+2 e^{57},-2e^{12}+2e^{34}-2\tau e^{57},0),\\
		(e^{17},e^{27},e^{37},e^{47},2\tau e^{12}-2\tau e^{34}+2 e^{57},-2e^{12}+2e^{34}-2\tau e^{57},0).
	\end{gather*}
	If $\check D^s$ is not diagonalizable, we can exploit the $U(1,1)$ symmetry preserving the pseudo-K\"ahler structure of $\check\g$. Indeed, a symmetric derivation commuting with $J$ is effectively an element of $i\lie{u}(1,1)$, with $U(1,1)$ acting on it by the adjoint action. Write $\check D^s=tI+\check D^s_0$, where $\check D^s_0$ is traceless. Then $\check D^s_0$ can therefore be viewed as an element of $i\su(1,1)$. Now $\SU(1,1)$ is isomorphic to $\SL(2,\R)$ via the Cayley isomorphism
	\begin{equation}
		\label{eqn:Cayley}
		\SL(2,\R)\ni g\mapsto CgC^{-1}\in\SU(1,1),
	\end{equation}
	where $C=\begin{pmatrix} 1& -i \\ 1 & i \end{pmatrix}$. The action of $\SL(2,\R)$ on its Lie algebra is conjugation, so any nondiagonalizable element of $\Sl(2,\R)$ is in the $\SL(2,\R)$-orbit of
	$\begin{pmatrix} 0 & 1 \\ 0 & 0 \end{pmatrix}$. Reading this in $\su(1,1)$ via \eqref{eqn:Cayley} and multiplying by $-i$, we see that $\check D^s_0$ corresponds to the complex matrix $\begin{pmatrix} 1/2& -1/2 \\ 1/2 &-1/2 \end{pmatrix}$; writing it as a real matrix, we obtain
	\[\check D^s=\begin{pmatrix}(t+\frac12)I& -\frac12 I \\ \frac12 I & (t-\frac12) I\end{pmatrix}.\]
	A derivation $\check{D}$ that satisfies $[D,J]=0$ and is not diagonalizable takes the form
	\[
	\check D=\begin{pmatrix}
		x&\lambda_2&\lambda_5-1&-\lambda_6\\
		-\lambda_2&x&\lambda_6&\lambda_5-1\\
		\lambda_5&\lambda_{6}&x-1&\lambda_8\\
		-\lambda_6&\lambda_5&-\lambda_8&x-1
	\end{pmatrix}.
	\]
	Now, thanks to Proposition \ref{prop:pseudoAzencottWilson}, we can consider any
	\[
	\check D'=\begin{pmatrix}
		y&\mu_2&\mu_5-1&-\mu_6\\
		-\mu_2&y&\mu_6&\mu_5-1\\
		\mu_5&\mu_{6}&y-1&\mu_8\\
		-\mu_6&\mu_5&-\mu_8&y-1
	\end{pmatrix}.
	\]
	such that $[\check{D}',\check{D}]=0$ and $\check{D}'^s=\check{D}^s$. This yields $y=x$, $\mu_5=\lambda_5$, $\mu_6=\lambda_6$ and $\mu_2-\mu_8=\lambda_2-\lambda_8$, hence we can consider $\check D$ to be
	\[
	\check D=\begin{pmatrix}
		x&\lambda_2&\lambda_5-1&-\lambda_6\\
		-\lambda_2&x&\lambda_6&\lambda_5-1\\
		\lambda_5&\lambda_{6}&x-1&0\\
		-\lambda_6&\lambda_5&0&x-1
	\end{pmatrix}.
	\]
	Again we distinguish two cases depending on $h$.
	
	If $h=0$ then equation $[\check D^s,\check D^a]=h\check D^s-2(\check D^s)^2$ yields
	\[
	\check{D}=\begin{pmatrix}
		\frac{1}{2}&2\lambda&-\frac12&-\lambda\\
		-2\lambda&\frac{1}{2}&\lambda&-\frac12\\
		\frac12&\lambda&-\frac{1}{2}&0\\
		-\lambda&\frac12&0&-\frac12
	\end{pmatrix}.
	\]
	Hence we set $d\xi^\flat=-2e^{12}+2e^{34}$, $db^\flat=\tau e^{12}-\tau e^{14}+\tau e^{23}+\tau e^{34}$, and the first Lie algebra extension is
	\[
	\mathfrak{g}=(0, 0, 0, 0, \tau e^{12}-\tau e^{14}+\tau e^{23}+\tau e^{34}, -2e^{12}+2e^{34}),
	\]
	with metric
	\begin{equation}
		\label{eqn:metrich0}
		g = e^1\otimes e^1+e^2\otimes e^2-e^3\otimes e^3-e^4\otimes e^4+\tau b^\flat\otimes b^\flat+\xi^\flat\otimes\xi^\flat.
	\end{equation}
	The Sasaki extension $\tilde{\mathfrak{g}}=\mathfrak{g}\rtimes\Span{e_0}$ is determined by
	\begin{gather*}
		d\xi^\flat=-2e^{12}+2e^{34},\qquad db^\flat=\tau e^{12}-\tau e^{14}+\tau e^{23}+\tau e^{34}\\
		[e_0,x]=\check{D}x,\qquad[e_0,\xi]=0,\qquad[e_0,b]=-2\tau\xi;
	\end{gather*}
	hence the Lie algebra is
	\begin{gather*}\tilde{\mathfrak{g}}=(\frac{1}{2}e^{17}+2\lambda e^{27}-\frac12e^{37}-\lambda e^{47},-2\lambda e^{17}+\frac12e^{27}+\lambda e^{37}-\frac12e^{47},\\
		\frac12e^{17}+\lambda e^{27}-\frac12e^{37},-\lambda e^{17}+\frac12e^{27}-\frac12e^{47},\\
		\tau e^{12}-\tau e^{14}+\tau e^{23}+\tau e^{34},-2e^{12}+2e^{34}-2\tau e^{57},0).
	\end{gather*}
	If $h=2$ then equation $[\check D^s,\check D^a]=h\check D^s-2(\check D^s)^2$ yields two distinct solutions for $\check{D}$:
	\[
	\check{D}_1=
	\begin{pmatrix}
		\frac12 & 2\lambda &-\frac32&-\lambda\\
		-2\lambda & \frac12& \lambda & -\frac32\\
		-\frac12&\lambda& -\frac12&0\\
		-\lambda&-\frac12&0&-\frac12
	\end{pmatrix}\quad\text{or}\quad\check{D}_2=
	\begin{pmatrix}
		\frac32 & 2\lambda &\frac12&-\lambda\\
		-2\lambda & \frac32& \lambda & \frac12\\
		\frac32&\lambda& \frac12&0\\
		-\lambda&\frac32&0&\frac12
	\end{pmatrix}.
	\]
	For $\check{D}_1$ we get $db^\flat=\tau e^{12}-\tau e^{14}+\tau e^{23}+\tau e^{34}$, hence
	\[
	\mathfrak{g}=(0,0,0,0,\tau e^{12}-\tau e^{14}+\tau e^{23}+\tau e^{34},-2e^{12}+2e^{34});
	\]
	for $\check{D}_2$ we get $db^\flat=3\tau e^{12}-\tau e^{14}+\tau e^{23}-\tau e^{34}$ and
	\[
	\mathfrak{g}=(0,0,0,0,3\tau e^{12}-\tau e^{14}+\tau e^{23}-\tau e^{34},-2e^{12}+2e^{34}).
	\]
	In both cases, the metric is given by \eqref{eqn:metrich0}. The resulting Lie algebras $\tilde{\mathfrak{g}}$ correspond to n. $10$ and n. $11$ in Table~\ref{table:ZstdSasaki7D}.
\end{proof}
\begin{remark}
	Each Lie algebra in rows 1--4 of Table~\ref{table:ZstdSasaki7D} is isomorphic to a Lie algebra in rows 5--8 under the transformation $e_3\mapsto -e_3$. This isomorphism is not an isometry; indeed, there is no isometry preserving the standard decomposition $\Span{e_1,\dotsc, e_6}\rtimes\Span{e_7}$, because the nilpotent factors $\Span{e_1,\dotsc, e_6}$ have different signatures. Notice also that, in each row, Lie algebras corresponding to opposite values of $\tau$ are isomorphic under the transformation $e_5\mapsto -e_5$.
\end{remark}

\medskip
\small\noindent D. Conti: Dipartimento di Matematica, Università di Pisa, largo B. Pontecorvo 6, 56127 Pisa, Italy.\\
\texttt{diego.conti@unipi.it}\\
\small\noindent R.~Segnan Dalmasso: Dipartimento di Matematica e Applicazioni, Universit\`a di Milano Bicocca, via Cozzi 55, 20125 Milano, Italy.\\
\texttt{r.segnandalmasso@campus.unimib.it}\\
\small\noindent F. A. Rossi: Dipartimento di Matematica e Informatica, Universit\`a degli studi di Perugia, via Vanvitelli 1, 06123 Perugia, Italy.\\
\texttt{federicoalberto.rossi@unipg.it}

%

\end{document}